\definecolor{mypink1}{rgb}{1.00,0.10,1.00}
\newtheorem{theo}{Theorem}[section]
\newtheorem{lemma}[theo]{Lemma}
\newtheorem{cor}[theo]{Corollary}
\newtheorem{prop}[theo]{Proposition}
\newtheorem{defi}[theo]{Definition}
\newtheorem{remark}[theo]{Remark}
\newtheorem{example}[theo]{Example}
\definecolor{marron}{rgb}{0.1,0.4,0.1}
\def\R{{\rm I}\hskip-.13em{\rm R}}
\def\cC{{{\mathcal C}}}
\def\cK{{{\mathcal K}}}
\def\cA{{{\mathcal A}}}
\def\cero{0_X}
\begin{document}

\title[Separation Theorems of Co-radiant Sets and Optimality Conditions]{Nonlinear Separation Theorems for Co-Radiant Sets and Optimality Conditions for Approximate and Proper Approximate Solutions in Vector Optimization}

\author{Fernando Garc\'ia-Casta\~no}
\address{University de Alicante, Department of Mathematics, Carretera San Vicente del Raspeig, s/n, 03690 San Vicente del Raspeig, Alicante, Spain.\\ https://orcid.org/0000-0002-8352-8235 }
\email{fernando.gc@ua.es}


\author{Miguel \'Angel Melguizo-Padial}

\address{%
University de Alicante, Deparment of Mathematics, Carretera San Vicente del Raspeig, s/n, 03690 San Vicente del Raspeig, Alicante, Spain.\\ https://orcid.org/0000-0003-0303-791X}

\email{ma.mp@ua.es}
\subjclass{46N10, 90C26, 90C29, 49K27}

\keywords{Co-radiant sets, nonlinear separation theorems, approximate solutions, $\varepsilon$-efficiency, scalarization, vector optimization}

\date{}

\begin{abstract}
This paper deals with \(\varepsilon\)-efficient and \(\varepsilon\)-properly efficient points with respect to a co-radiant set in vector optimization problems. In the first part of the paper, we establish a new nonlinear separation theorem for co-radiant sets in normed spaces. Subsequently, we obtain necessary and sufficient conditions, via scalarization, for both \(\varepsilon\)-efficient and \(\varepsilon\)-properly efficient points in a general framework, without requiring any assumptions on the co-radiant set or convexity conditions on the sets under consideration. Consequently, our results are applicable in a broader range of settings than those previously addressed in the literature.
\end{abstract}

\maketitle
\section{Introduction}\label{intro}
In optimization, many types of solutions have been studied, each of which provides information in a particular sense.  We find, for example, the notions of strong solution, weak solution, proper solution, etc.,  (see \cite{Jahn2004,GarciaMelguizo2023} and references therein). Among them, we can find the concept of approximate solution that naturally arises, for instance, when simplifying practical problems or when applying certain methods to solve optimization programs. In these cases,  solutions close to the exact solutions of the problem may be proposed and leads to the concept of an approximate solution.
It is commonly accepted in the literature that the first notion of approximate solution introduced in vector optimization was that of Kutateladze in \cite{Kutateladze1979}.  This notion has been applied in various optimization topics, as can be seen, for example, in \cite{JiHong2008,Dutta2001,Gutierrez2004,Gutierrez2005,GUTIERREZ2005b,Idrissi1988,Isac1996,Liu1991,Liu1996,Liu1999,Loridan1984,Loridan1992,Ruhe1990,Tammer1992,Valyi1987,WHITE1998}. Subsequently, other notions of approximate solutions were introduced (see, for example, \cite{Helbig1994,Nemeth1986,Tanaka1994,Valyi1985,White1986}). Finally, the authors in \cite{Gutierrez2006a}  introduced a new notion of approximate solution called the \(\varepsilon\)-efficient point, based on the notion of co-radiant set, which provided a common framework for simultaneously studying all the different notions of approximate solutions that had appeared until then.  In this context, the authors of \cite{Gutierrez2006a} derived necessary and sufficient conditions via nonlinear scalarization. Their approach makes use of a functional obtained from a nonconvex separation theorem. These conditions required certain additional assumptions on the co-radiant sets which are involved in the definition of \(\varepsilon\)-efficient point. For example, for necessary conditions, it was required that the co-radiant set be star-shaped with a solid kernel and, for sufficient conditions,  it was necessary  to use an extra strictly local monotone function.

In this work, we focus on the study of $\varepsilon$-efficient points,  $\varepsilon$-properly efficient points, and the derivation of necessary and sufficient conditions for them through the use of sublinear scalarizations.  In our approach, we eliminate the aforementioned restrictions on the co-radiant sets found in the statements in \cite{Gutierrez2006a}. However, to achieve this, we need a functional to separate co-radiant sets.  The only theorem we have found in the literature dealing with the separation of co-radiant sets is \cite[Theorem 1]{Sayadibander2017}, which is not sufficiently general for our needs. This theorem only applies to closed co-radiant sets with a bounded base in finite-dimensional spaces. Moreover, it presents some unsolved technical difficulties.  For such reasons, we begin the work by establishing some new nonlinear separation results for co-radiant sets, as general as possible.  In the proof of such results, we make use of the main result of \cite{GarciaMelguizo2023}, using a level surface constructed from the Bishop-Phelps cone which can be interpolated between the corresponding cones generated by the co-radiant sets we want to separate.  
After that, in the last part of this work, we determine necessary and sufficient conditions for \(\varepsilon\)-efficient points and \(\varepsilon\)-properly efficient points via sublinear scalarization.  We consider co-radiant sets of Bishop-Phelps type and study the $\varepsilon$-efficient points associated with them. We prove that those points are solutions to a sublinear scalar optimization problem and we establish  connections between them and the \(\varepsilon\)-efficient points associated with general co-radiant sets.  As a consequence of this analysis, we obtain our necessary and sufficient conditions which do not need the restrictions found in the results of \cite{Gutierrez2006a} as well as those in \cite{Sayadibander2017}.  

The plan of the paper is as follows.  In the subsequent section, we introduce the basic notation and terminology.   In Section \ref{SeparationGeneral}, we introduce the separation property for cones we need from \cite{GarciaMelguizo2023} and we establish some results regarding such a property related to the cones generated by co-radiant sets.  The results in this section are preparatory for Section \ref{Subsection:Proofs_Separation_Theorems} in which we establish our theorems of separation of co-radiant sets.  In Section \ref{Scalarization}, we introduce the notions of $\varepsilon$-efficient and $\varepsilon$-properly efficient points and relate them to scalarization techniques. 

\section{Notation and Previous Terminology} \label{Preeliminares}
Throughout the paper, let \( X \) denote a normed space, with \( \| \cdot \| \) denoting its norm, \( X^* \) the dual Banach space of \( X \), and \( \| \cdot \|_* \) the corresponding dual norm. We denote by \( B_X := \{ x \in X : \|x\| \leq 1 \} \) (resp. \( B_{X^*} := \{ f \in X^* : \|f\|_* \leq 1 \} \)) the closed unit ball of \( X \) (resp. of \( X^* \)), and by \( S_X := \{ x \in X : \|x\| = 1 \} \) (resp. \( S_{X^*} := \{ f \in X^* : \|f\|_* = 1 \} \)) the unit sphere of \( X \) (resp. of \( X^* \)). Additionally, we define \( B_X(x, \varepsilon) := \{ x' \in X : \| x - x' \| \leq \varepsilon \} \).

Given a subset \( S \subset X \), we denote by \( \overline{S} \) (resp. \( \text{bd}(S) \), \( \text{int}(S) \), \( \text{co}(S) \), \( \overline{\text{co}}(S) \), \( S^c \)) the closure (resp. boundary, interior, convex hull, closure of the convex hull, complement) of \( S \). We say that a set \( S \) is {solid} if \( \text{int}(S) \neq \emptyset \), {proper} if \( S \neq \emptyset \) and \( S \neq X \), and {pointed} if \( S \cap (-S) \subset \{ \cero \} \), i.e., if \( S \cap (-S) = \{ \cero \} \) when \( \cero \in S \), and if \( S \cap (-S) = \emptyset \) when \( \cero \notin S \).

A subset $C \subset X$ is said to be a cone if $\lambda x \in C$ for every $\lambda \geq 0$ and $x \in C$.  A cone $C$ is said to be  a convex cone if $C$ is a convex subset of $X$. Fixed a subset $S \subset X$, we define the cone generated by $S$ as cone$(S):=\{\lambda s \colon \lambda\geq 0,\, s \in S\}$ and $\overline{\mbox{cone}}(S)$ stands for the closure of cone$(S)$.  A non-empty convex subset $B$ of a convex cone $C$ is said to be a base of $C$  if $\cero \not \in \overline{B}$ and for every $x\in C$, $x\not =\cero$, there exists unique $\lambda_x>0$ and $b_x\in B$ such that $x=\lambda_xb_x$.  A convex cone $C$ is said to have a bounded base if there exists a base $B$ of $C$ such that it is a bounded subset of $X$. Given a cone $C\subset X$, it is defined the dual cone of $C$ by $C^*:=\{f \in X^*\colon f(x)\geq 0,\, \forall x \in C\}$ and the quasi-interior of $C^*$ by $C^{\#}:=\{f \in X^*\colon f(x)> 0,\, \forall x \in C \setminus \{0_X\} \}$.   It is known that $C^{\#}\supset \mbox{int}(C^*)$ and that any   convex cone $C\subset X$ has a base if and only if $C^{\#}\not = \emptyset$. 

The following sets were introduced in \cite{GarciaMelguizo2023}  and they are called augmented dual cones, $C^{a*}_+:=\{(f,\alpha)\in C^{\#}\times (0,+\infty) \colon f(x)-\alpha\|x \| \geq 0,\, \forall x \in C\}$ and $C^{a\#}_+:=\{(f,\alpha)\in C^{\#}\times (0,+\infty) \colon f(x)-\alpha\| x \| > 0,\, \forall x \in C \setminus \{ 0_X\}\}$. A function $p:X\rightarrow \R$ is said to be sublinear if it is both positively homogeneous and subadditive, i.e.,  $p(\alpha x)=\alpha p(x)$, for every $\alpha\geq 0$, and $p(x+y)\leq p(x)+p(y)$, for all $x$, $y \in X$, respectively.

We say that a subset $\cC \subset X$ is co-radiant if  $\lambda x \in \cC$ for every $\lambda  \geq  1$ and  $x \in \cC$.  It is clear that every cone in a normed space is also a co-radiant set.   All co-radiant sets in this manuscript are assumed to be proper.  We define the  dual co-radiant set of $\cC$ as 
$\cC^*:=\{ f \in X^*: f(y) \geq 0, \,  \forall y \in \cC \}$  and the quasi-interior of $\cC^*$ as $\cC^{\#}:=\{f \in X^*: f(y) > 0,\,  \forall  y \in \cC \}$.  Let $0<\varepsilon \leq 1$ and $\cC \subset X $ be a co-radiant set,  we define   $\cC(\varepsilon):=\varepsilon \cC$. Now,  fix $\varepsilon,\,\lambda> 0$. Following \cite{Sayadibander2017}, we define the augmented $\varepsilon$-dual co-radiant set of $\cC$ by
\begin{equation*}
\cC^{a*}_{\lambda}(\varepsilon):=\left\lbrace (f,\alpha) \in \cC^{\#} \times   \mathbb{R}_+: f(y) -\alpha \parallel y \parallel \geq \lambda,  \,  \forall y \in \cC(\varepsilon) \right\rbrace, \\
\end{equation*}
and the augmented  $\varepsilon$-quasi-interior of $\cC^{a*}_{\lambda}(\varepsilon)$ by
\begin{equation*}
\cC^{a\#}_{\lambda}(\varepsilon):=\left\lbrace (f,\alpha) \in\cC^{\#} \times   \mathbb{R}_+: f(y) -\alpha \parallel y \parallel > \lambda,  \,  \forall y \in \cC(\varepsilon) \right\rbrace.
\end{equation*}

\section{Some Preliminary Results Regarding Cones and Co-Radiant Sets} \label{SeparationGeneral}
We begin this section with a separation property for cones introduced in \cite{GarciaMelguizo2023}. In the first part of the section, we study some stability properties of this separation property when one of the cones involved is dilated. The second part of the section is devoted to analysing the properties of the norm bases of co-radiant sets, as well as the properties of the cones generated by them. These results will be useful in the proofs presented in the following section.

\begin{defi}(\cite[Definition 2]{GarciaMelguizo2023})\label{SSPconos}
Let $X$ be a normed space and let $C$ and $K$ be cones in $X$. We say that the pair of cones $(C,K)$ satisfies the strict separation property (SSP for short) if $\cero\not \in \overline{\mbox{co}{(C\cap S_X)-\mbox{co}((bd(K)\cap S_X)\cup\{\cero\})}}$.
\end{defi}

We next state a characterization of the SSP, proved in \cite{GarciaMelguizo2023}, which will be used throughout this work.

\begin{theo}(\cite[Theorem 3.1] {GarciaMelguizo2023})\label{Tma1_sep_conos_MMOR}
Let $X$ be a normed space and let $C$ and $K$ be cones in $X$. Then the pair $(C,K)$ satisfies the SSP if and only if there exist $\alpha_1$, $\alpha_2 \in \mathbb{R}$ and $f \in X^*$ such that $0 < \alpha_1 < \alpha_2$ and $f(x) + \alpha \|x\| < 0 < f(y) + \alpha \|y\|$, for every $\alpha\in (\alpha_1,\alpha_2)$, $x\in -C\setminus \{\cero\}$, and $y\in \mbox{bd}(-K)\setminus \{\cero\}$.
\end{theo}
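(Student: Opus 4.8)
The plan is to route both of the equivalent conditions through a single \emph{intermediate scalarization inequality}, namely the existence of $f \in X^*$ with
$$\inf_{x \in C \cap S_X} f(x) > \max\Big(0,\ \sup_{y \in \mbox{bd}(K) \cap S_X} f(y)\Big),$$
and to prove the SSP equivalent to this inequality on the one hand, and the stated two-sided inequality equivalent to it on the other. Writing $A := C \cap S_X$ and $B := \mbox{co}\big((\mbox{bd}(K) \cap S_X) \cup \{\cero\}\big)$, the SSP says precisely that $\cero \notin \overline{\mbox{co}}(A - B)$, where $A-B$ denotes the Minkowski difference.

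First I would establish that the SSP is equivalent to the intermediate inequality by Hahn--Banach strict separation. Since $\overline{\mbox{co}}(A-B)$ is a closed convex set not containing the compact singleton $\{\cero\}$, the strict separation theorem yields $f \in X^*$ and $\gamma > 0$ with $f(z) \geq \gamma$ for every $z \in A - B$, that is $f(a) - f(b) \geq \gamma$ for all $a \in A$ and $b \in B$. Taking the infimum over $a$ and the supremum over $b$ gives $\inf_{A} f - \sup_{B} f \geq \gamma > 0$. Because a continuous linear functional attains its supremum over a convex hull already on the generating set, and because $\cero \in B$, one has $\sup_{B} f = \max\big(0,\ \sup_{\mbox{bd}(K)\cap S_X} f\big)$, which is exactly the intermediate inequality. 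The converse is immediate: such an $f$ satisfies $f \geq \gamma := \inf_{A} f - \sup_{B} f > 0$ on $\mbox{co}(A-B)$, hence on its closure, so $\cero$, where $f$ vanishes, cannot belong to $\overline{\mbox{co}}(A-B)$.

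Second I would show the intermediate inequality equivalent to the stated condition by an algebraic translation. Writing a generic element of $-C\setminus\{\cero\}$ as $-c$ and using positive homogeneity of the norm, the requirement $f(x) + \alpha\|x\| < 0$ for all $x \in -C\setminus\{\cero\}$ becomes $f(c) > \alpha$ for every $c \in C \cap S_X$; imposing it for all $\alpha \in (\alpha_1,\alpha_2)$ forces $\inf_{C\cap S_X} f \geq \alpha_2$. Likewise, since $\mbox{bd}(-K) = -\mbox{bd}(K)$, the requirement $0 < f(y) + \alpha\|y\|$ on $\mbox{bd}(-K)\setminus\{\cero\}$ reduces to $\sup_{\mbox{bd}(K)\cap S_X} f \leq \alpha_1$. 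Thus the stated condition amounts to the existence of $f$ and $0 < \alpha_1 < \alpha_2$ with $\sup_{\mbox{bd}(K)\cap S_X} f \leq \alpha_1 < \alpha_2 \leq \inf_{C\cap S_X} f$. Setting $s := \sup_{\mbox{bd}(K)\cap S_X} f$ and $i := \inf_{C\cap S_X} f$, the intermediate inequality reads $\max(0,s) < i$, so any $\alpha_1 \in (\max(0,s),\, i)$ together with $\alpha_2 \in (\alpha_1,\, i)$ realizes the sandwich; conversely the sandwich returns $\max(0,s) < i$ at once.

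The main obstacle I anticipate is not any deep functional-analytic point but careful bookkeeping around the origin and the passage between strict and non-strict inequalities. The presence of $\cero$ inside $B$ is exactly what produces the term $\max(0,\cdot)$ and, on the analytic side, the constraint $\alpha_1 > 0$, and I must verify that these two features match. I would also check the degenerate cases in which $C \cap S_X$ or $\mbox{bd}(K) \cap S_X$ is empty, under the conventions $\inf \emptyset = +\infty$ and $\sup \emptyset = -\infty$, confirm that strict separation of a point from a closed convex set is valid in an arbitrary normed space, and make sure that moving from the per-$\alpha$ strict inequalities to the closed $\inf/\sup$ bounds, and back by choosing $\alpha_1,\alpha_2$ strictly inside the gap $(\,\max(0,s),\,i\,)$, loses nothing.
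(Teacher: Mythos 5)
Your proposal is correct, but there is nothing in the paper to compare it with: the paper imports this theorem verbatim from \cite[Theorem 3.1]{GarciaMelguizo2023} and never proves it, so your argument supplies a self-contained derivation of a result the authors use as a black box. Both halves of your reduction are sound. The equivalence of the SSP with your intermediate inequality is exactly Hahn--Banach strict separation of the compact convex set $\{\cero\}$ from the closed convex set $\overline{\mbox{co}(C\cap S_X)-\mbox{co}((\mbox{bd}(K)\cap S_X)\cup\{\cero\})}$ (valid in every normed, indeed locally convex, space; no completeness is needed), together with the fact that $\sup_{\mbox{co}(S)}f=\sup_S f$ for linear $f$; and your algebraic translation correctly converts the per-$\alpha$ strict inequalities into the closed bounds $\sup_{\mbox{bd}(K)\cap S_X}f\leq\alpha_1<\alpha_2\leq\inf_{C\cap S_X}f$, the gap $\left(\max(0,s),\,i\right)$ being nonempty exactly when your intermediate inequality holds. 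Two small points deserve to be made explicit when you write this up. First, your rewriting of the SSP as $\cero\notin\overline{\mbox{co}}(A-B)$ uses the identity $\mbox{co}(A)-\mbox{co}(B_0)=\mbox{co}(A-B_0)$, where $B_0:=(\mbox{bd}(K)\cap S_X)\cup\{\cero\}$ (the convex hull of a Minkowski sum is the Minkowski sum of the convex hulls); this is elementary but should be stated, since the SSP as defined places the hulls on the two factors separately. Second, reducing the inequality on $\mbox{bd}(-K)\setminus\{\cero\}$ to the unit sphere requires that $\mbox{bd}(K)\setminus\{\cero\}$ be stable under positive scaling; this is true because multiplication by $\lambda>0$ is a homeomorphism of $X$ mapping $K$ onto $K$, hence mapping $\mbox{bd}(K)$ onto $\mbox{bd}(K)$, but it is a property of the boundary of a cone rather than mere positive homogeneity of the norm, and should be justified. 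With these details filled in, and the empty-set conventions you already flag, the proof is complete.
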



Next, we introduce (from \cite{GarciaMelguizo2024}) the concept of a $\delta$-conic neighborhood of a cone. 

\begin{defi}
Let $X$ be a normed space, let $C \subset X$ be a cone, and let $0<\delta<1$. 
For any $y \in X$ and set $A \subset X$, let \(d(y,A) := \inf_{a \in A} \|y-a\|\)
denote the distance from $y$ to $A$.  
Consider \(S_{\delta} := \{y \in X \colon d(y, C \cap S_X) \leq \delta\}\).
The $\delta$-conic neighbourhood of $C$ is the cone defined by \(C_{\delta} := \mathrm{cone}(S_{\delta})\).
\end{defi}

The following result establishes that, for $\delta$ small enough, the SSP also keeps holding when we dilate $C$ making use of a $\delta$-conic neighbourhood.

\begin{lemma}\label{lema:cono_ampliado_SSP}
Let $X$ be a normed space and let $C$ and $K$ be cones in $X$. If the pair $(C,K)$ satisfies the SSP, then there exists $\delta' >0$ such that the pair $(C_{\delta},K)$ satisfies the SSP for every $\delta \in (0, \delta')$.
\end{lemma}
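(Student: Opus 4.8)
The plan is to sidestep the geometric definition of the SSP entirely and argue through the functional characterization in Theorem~\ref{Tma1_sep_conos_MMOR}. The point is that the separating data $(f,\alpha_1,\alpha_2)$ for $(C,K)$ can be \emph{reused} for $(C_\delta,K)$ after only shrinking the admissible parameter interval: since $C\cap S_X\subset S_\delta$ forces $C\subset C_\delta$, while $K$ is left untouched, dilating $C$ can only make the ``$C$-side'' inequality harder, and this deterioration is controlled linearly by $\delta$. So I would keep $f$ fixed and re-tune $\alpha_1,\alpha_2$.

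Concretely, I would first apply Theorem~\ref{Tma1_sep_conos_MMOR} to obtain $f\in X^*$ and $0<\alpha_1<\alpha_2$ with $f(x)+\alpha\|x\|<0<f(y)+\alpha\|y\|$ for all $\alpha\in(\alpha_1,\alpha_2)$, $x\in -C\setminus\{\cero\}$ and $y\in\mathrm{bd}(-K)\setminus\{\cero\}$ (note $f\neq 0$, so $\|f\|_*>0$). By positive homogeneity of $x\mapsto f(x)+\alpha\|x\|$ it suffices to test on $S_X$, and letting $\alpha\to\alpha_2^-$ and $\alpha\to\alpha_1^+$ these inequalities reduce to
\begin{equation*}
f(c)\ \ge\ \alpha_2\quad (c\in C\cap S_X),\qquad f(y)\ \ge\ -\alpha_1\quad (y\in\mathrm{bd}(-K)\cap S_X).
\end{equation*}
The crux is then a geometric estimate: for every $u\in C_\delta\cap S_X$ one has $d(u,C\cap S_X)\le 2\delta$. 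To see this, write $u=\mu s$ with $s\in S_\delta$ and $\mu>0$ (possible since $u\neq\cero$), pick $c\in C\cap S_X$ with $\|s-c\|$ arbitrarily close to $d(s,C\cap S_X)\le\delta$, and use the normalization $\mu\|s\|=\|u\|=1$ together with $\big|1-\|s\|\big|=\big|\,\|c\|-\|s\|\,\big|\le\|s-c\|$ to get $\|u-c\|\le|\mu-1|\,\|s\|+\|s-c\|=\big|1-\|s\|\big|+\|s-c\|\le 2\delta$. Combining this with the bound above yields, for every $u\in C_\delta\cap S_X$,
\begin{equation*}
f(u)\ \ge\ f(c)-\|f\|_*\,\|u-c\|\ \ge\ \alpha_2-2\|f\|_*\,\delta ,
\end{equation*}
so that $\inf_{C_\delta\cap S_X}f\ge \alpha_2-2\|f\|_*\delta$.

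To finish, I would set $\delta':=\min\{1,\,(\alpha_2-\alpha_1)/(2\|f\|_*)\}$. For any $\delta\in(0,\delta')$ we have $\alpha_2-2\|f\|_*\delta>\alpha_1$, so we may choose $\tilde\alpha_1,\tilde\alpha_2$ with $\alpha_1<\tilde\alpha_1<\tilde\alpha_2<\alpha_2-2\|f\|_*\delta$. With the \emph{same} functional $f$ and any $\alpha\in(\tilde\alpha_1,\tilde\alpha_2)$: the $K$-side inequality $0<f(y)+\alpha\|y\|$ holds because $\alpha>\alpha_1$ and $f(y)\ge-\alpha_1$ on $\mathrm{bd}(-K)\cap S_X$; while the $C_\delta$-side inequality $f(x)+\alpha\|x\|<0$ for $x\in -C_\delta\setminus\{\cero\}$ holds because on $C_\delta\cap S_X$ we have $f(u)\ge\alpha_2-2\|f\|_*\delta>\tilde\alpha_2>\alpha$. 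The converse direction of Theorem~\ref{Tma1_sep_conos_MMOR} then gives that $(C_\delta,K)$ satisfies the SSP, as required. The main obstacle I expect is precisely the renormalization in the geometric estimate (passing from $s\in S_\delta$ to the unit vector $u=\mu s$ and bounding $\|u-c\|$); the degenerate case $C=\{\cero\}$, where $S_\delta=\emptyset$ and $C_\delta=C$, is handled trivially, and the monotonicity of $\delta\mapsto 2\|f\|_*\delta$ guarantees the conclusion persists for all smaller $\delta$.
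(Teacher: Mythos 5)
Your proof is correct and follows essentially the same route as the paper's: both arguments reuse the separating functional $f$ and the interval supplied by Theorem~\ref{Tma1_sep_conos_MMOR}, shrink that interval, control the effect of the $\delta$-dilation linearly in $\delta$ (with a threshold $\delta'$ equal to the available margin divided by a constant involving $\|f\|_*$), and then invoke the converse direction of Theorem~\ref{Tma1_sep_conos_MMOR} to conclude that $(C_\delta,K)$ satisfies the SSP. The only difference is bookkeeping: the paper decomposes elements of $-\mathrm{cone}(C\cap S_X+2\delta B_X)$ and absorbs the error term $2\delta\lambda(\|f\|_*+\alpha_0)$ into an enlarged norm coefficient, whereas you normalize to the unit sphere, prove the proximity estimate $d(u,C\cap S_X)\le 2\delta$ for $u\in C_\delta\cap S_X$, and absorb the error into a lowered bound $\alpha_2-2\|f\|_*\delta$ on $\inf_{C_\delta\cap S_X}f$.
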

\begin{proof}
Assume that the pair $(C,K)$ satisfies the SSP. By Theorem \ref{Tma1_sep_conos_MMOR}, there exist $\delta_1, \delta_2 \in \mathbb{R}$ and $f \in C^{\#}$ such that $0 < \delta_1 < \delta_2$, and for every $\alpha \in (\delta_1, \delta_2)$, we have $(f, \alpha) \in C^{a\#}_+$ satisfying $f(x) + \alpha \|x\| < 0 < f(y) + \alpha \|y\|$ for all $x \in -C \setminus \{\cero\}$ and $y \in \mathrm{bd}(-K) \setminus \{\cero\}$. Now pick $\alpha_0, \alpha_1, \alpha_2 \in \mathbb{R}$ such that $0<\delta_1<\alpha_1<\alpha_0<\alpha_2 <\delta_2$ and consider $\delta' :=\frac{\alpha_2-\alpha_0}{2(\| f \|+ \alpha_0)}$. Let  $\delta \in (0, \delta') $ and fix $x\in -\mbox{cone}(S_X \cap C+ 2\delta B_X) \setminus \lbrace 0_X\rbrace$. Then  $f(x)+ \alpha \| x \| <0$ for every $\alpha \in [\alpha_1,\alpha_0]$. Indeed, we write $x=-\lambda(s+2\delta b)$ for some $s \in C \cap S_X$, $b \in B_X$, and $\lambda>0$.  Thus 
\begin{align*}
f(x) + \alpha \|x\| 
&\leq f(x) + \alpha_0 \|x\| \\
&= f(-\lambda(s+2\delta b)) + \alpha_0 \|\lambda(s+2\delta b)\| \\
&= f(-\lambda s) + 2\delta \lambda f(-b) + \alpha_0 \|\lambda s + 2\lambda \delta b\| \\
&\leq f(-\lambda s) + 2\delta \lambda f(-b) + \alpha_0 \|\lambda s\| + 2\alpha_0 \lambda \delta \|b\| \\
&\leq f(-\lambda s) + 2\delta \lambda \|f\|_* \|b\| +  \alpha_0 \|\lambda s\| +2\alpha_0 \lambda \delta \|b\| \\
&\leq f(-\lambda s) + 2\delta \lambda \|f\|_* + \alpha_0 \|\lambda s\| + 2\alpha_0 \lambda \delta \\
&= f(-\lambda s) + \alpha_0 \|\lambda s\| + 2\delta \lambda(\|f\|_* + \alpha_0) \\
&\leq f(-\lambda s) + \alpha_0 \|\lambda s\| + \lambda(\alpha_2 - \alpha_0) \\
&= f(-\lambda s) + \alpha_0 \|\lambda s\| + (\alpha_2 - \alpha_0)\|\lambda s\| \\
&= f(-\lambda s) + \alpha_2 \|-\lambda s\| < 0,
\end{align*}
where the second equality follows by the linearity of $f \in X^*$, 
the second inequality by the triangle inequality, 
the third inequality by the relation $f(-b) \leq |f(-b)| \leq \|f\|^* \|b\|$, 
and the fourth inequality by the fact that $\|b\|\leq~1$.
Now,  since  $C_{\delta}\ =\mbox{cone}(\overline{S_X \cap C+\delta B_X})\subset \mbox{cone}(S_X \cap C+2\delta B_X)$, it follows that for every $\alpha \in [\alpha_1,\alpha_0]$ we have $f(x)+\alpha \| x \| <0< f(y)+\alpha \| y \|$ whenever $x \in -C_{\delta}\setminus\{\cero\}$ and $y \in \mbox{bd}(-K)\setminus\lbrace \cero \rbrace$.  Furthermore,  the inequality   $f(x)+\alpha \| x \| <0$ for every $x \in -C_{\delta}\setminus\{\cero\}$ leads to $f \in (C_{\delta})^{a\#}_+$. Then, Theorem \ref{Tma1_sep_conos_MMOR} (ii) $ \Rightarrow$ (i) applies, concluding that the pair $(C_{\delta},K)$ satisfies the SSP.
\end{proof}

Below, we introduce the concept of norm base for a co-radiant set. This notion was introduced and used in \cite{Sayadibander2017} although not explicitly defined. Then, we will analyse some of its properties which are particularly helpful in our work as they facilitate a better understanding of the corresponding co-radiant sets. 
\color{black}
\begin{defi} 
Let $X$ be a normed space, let $\cC \subset X $ be a co-radiant subset, and $t>0$. We say that the set $\cC_t=\cC\cap t S_X$ is a norm base of $\cC$ with radius $t$ if for every $y \in \cC $ there exist $\lambda >0$ and $y_t \in \cC_t$ such that $y=\lambda \, y_t $.
\end{defi}

To illustrate this concept, we first present a simple example of a co-radiant set that admits a norm base. However, possessing a norm base is a rather stringent requirement for a co-radiant set. In Example~\ref{ejemplo}, we provide several co-radiant sets that do not admit such a base.

\begin{example} \label{ejemplo_con_normabase}
Given  $\cC:=\{ (x,y,z)\in \mathbb{R}^3: x,y,z\geq 0, x+y+z\geq 1 \}$,
it is easy to verify that for every $t \geq 1 $, the set
 $\cC_t:=\{(x,y,z)\in \mathbb{R}^3: x,y,z\geq 0, x^2+y^2+z^2=t^2 \}$ is a norm base of $\cC$.
\end{example}

Our following objective is to establish a characterization for the pair $(\mbox{cone}(\cC), \mbox{cone}(\cK))$ to satisfy the SSP in terms of the co-radiant sets $\cC$ and $\cK$. For such a purpose, we need the following  lemma.  In what follows, we write $I_{\cC}:= \inf \{ t>0:  \cC_t \mbox{ is a norm base of } \cC \}$ for any co-radiant set $\cC$ admitting a norm base.
\begin{lemma}\label{Lemma_antes_claims}
Let $X$ be a normed space, let $\cC\subset X$ be a co-radiant subset admitting a norm base, and $t>I_{\cC}$.  The following assertions hold.
\begin{itemize}
\item[(i)] $\mbox{cone}(\cC_t) =\mbox{cone}(\cC)$.
\item[(ii)] $\cC_t=\mbox{cone}(\cC_t) \cap tS_X$.
\item[(iii)] $\cC_t=t(\mbox{cone}(\cC)\cap S_X)$.  As a consequence,
if $\|x\|>I_{\cC}$, then $x\in \cC$ if and only if $x\in C$.
\item[(iv)] $\mbox{bd}(\cC) \cap t S_X= \mbox{bd}(\mbox{cone}(\cC)) \cap tS_X$.
\end{itemize}
\end{lemma}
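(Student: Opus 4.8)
The plan is to first establish an auxiliary monotonicity fact that is hidden in the hypothesis $t>I_{\cC}$: I claim that $\cC_t$ is itself a norm base of $\cC$ whenever $t>I_{\cC}$. Once this is available, assertion (iii) becomes the geometric heart of the lemma, and (i), (ii), (iv) follow with little extra work. Throughout I write $C:=\mbox{cone}(\cC)$.

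For the monotonicity step, by definition of the infimum $I_{\cC}$ there is a radius $t'$ with $I_{\cC}\le t'<t$ for which $\cC_{t'}$ is a norm base. Given any $y\in\cC$, the norm base property at $t'$ yields $y=\nu y_{t'}$ with $\nu>0$ and $y_{t'}\in\cC_{t'}$; comparing norms forces $y_{t'}=t'(y/\|y\|)$, so $t'(y/\|y\|)\in\cC$. Here is where the co-radiant hypothesis is essential: since $t/t'\ge 1$, scaling the point $t'(y/\|y\|)\in\cC$ by $t/t'$ keeps it in $\cC$, giving $t(y/\|y\|)\in\cC\cap tS_X=\cC_t$. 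As $y=(\|y\|/t)\,t(y/\|y\|)$, this exhibits $y$ as a positive multiple of a point of $\cC_t$, proving that $\cC_t$ is a norm base.

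With $\cC_t$ known to be a norm base I would prove (iii), $\cC_t=t(C\cap S_X)$, by double inclusion: the inclusion $\subset$ is immediate because $y_t\in\cC\subset C$ and $\|y_t\|=t$ give $y_t/t\in C\cap S_X$; for $\supset$, any $s\in C\cap S_X$ is a positive multiple $s=\mu c$ of some $c\in\cC$, which the norm base property rewrites as $c=\lambda c_t$ with $c_t\in\cC_t$, and the norm comparison $\mu\lambda t=1$ yields $ts=c_t\in\cC_t$. From the elementary identity $t(C\cap S_X)=C\cap tS_X$ (valid since $C$ is a cone), assertion (iii) becomes $\cC_t=C\cap tS_X$; this immediately gives the stated consequence that $x\in\cC\Leftrightarrow x\in C$ for every $x$ with $\|x\|=t>I_{\cC}$, and hence the claim for all $x$ with $\|x\|>I_{\cC}$. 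Assertion (ii) then follows since $\mbox{cone}(\cC_t)\cap tS_X=C\cap tS_X=\cC_t$ once (i) is known, and (i) itself is the standard fact that a base generates its cone: $\cC_t\subset\cC$ gives $\mbox{cone}(\cC_t)\subset C$, while the norm base property gives $\cC\subset\mbox{cone}(\cC_t)$ and hence $C\subset\mbox{cone}(\cC_t)$.

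Finally, for (iv) I use that the consequence of (iii) says precisely that $\cC$ and $C$ coincide on the open set $U:=\{x\in X:\|x\|>I_{\cC}\}$, i.e. $\cC\cap U=C\cap U$. Since the boundary is a local notion and $U$ is open, two sets agreeing on $U$ share the same boundary points inside $U$; as $tS_X\subset U$, this yields $\mbox{bd}(\cC)\cap tS_X=\mbox{bd}(C)\cap tS_X$. The main obstacle is the monotonicity step at the start: the hypothesis only tells us that $t$ exceeds the infimum of norm base radii, so genuine use of the co-radiant property is needed to guarantee that $\cC_t$ is actually a norm base — everything after that is bookkeeping with cones and norms.
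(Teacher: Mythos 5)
Your proposal is correct, and it differs from the paper's proof in two worthwhile ways. First, you prove explicitly that $\cC_t$ is a norm base for every $t>I_{\cC}$, by taking a norm base of some radius $t'<t$ and pushing it outward via the co-radiant scaling $t/t'\geq 1$; the paper takes this for granted (its entire argument for (i) is ``It is clear, because $\cC_t$ is a norm base of $\cC$''), so you have filled a genuine, if small, gap hidden in the hypothesis $t>I_{\cC}$. Apart from that, your treatment of (i)--(iii) --- double inclusions via the norm base property together with the cone identity $t(C\cap S_X)=C\cap tS_X$ --- matches the paper's. Second, for (iv) your route is genuinely different: the paper proves both inclusions by hand, with $\varepsilon$-ball estimates and a three-case normalization argument (the cases $\|x''\|=1$, $\|x''\|<1$, $\|x''\|>1$), whereas you observe that the consequence of (iii) says exactly that $\cC\cap U=\mathrm{cone}(\cC)\cap U$ for the open set $U=\{x\in X\colon \|x\|>I_{\cC}\}$, and then invoke locality of the topological boundary: if two sets agree on an open set $U$, their boundaries agree inside $U$ (shrink any neighbourhood of a point of $U$ so that it lies in $U$, where the sets, and hence their complements, are interchangeable). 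Since $tS_X\subset U$, assertion (iv) follows at once. Your argument is shorter and less error-prone, while the paper's is self-contained but laborious; the only presentational suggestion is to state the locality fact as an explicit one-line claim with its short proof, since it carries all the weight in (iv).
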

\begin{proof}
To simplify notation, we will denote $C:=\mbox{cone}(\cC)$.

(i) It is clear, because $\cC_t$ is a norm base of $\cC$.

(ii)  The inclusion $\subset$ is clear. For the reverse, we fix $x\in \mbox{cone}(\cC_t)\cap tS_X$. Then, there exist $\lambda>0$ and $y\in \cC_t$ such that $x=\lambda y$. Since $t=\|x\|=\|\lambda y\|=\lambda t$, it follows that $\lambda=1$. Hence $x=y\in \cC_t$.

(iii) 
We first prove the identity. For the inclusion \( \subset \), let \( y \in \cC_t \). Then \( \|y\| = t \) and \( \frac{y}{\|y\|} \in \mathrm{cone}(\cC) \cap S_X \), so \( y = t \cdot \frac{y}{\|y\|} \in t(\mathrm{cone}(\cC) \cap S_X) \). For the reverse inclusion \( \supset \), let \( y \in t(C \cap S_X) \). Then \( y = t u \) for some \( u \in C \cap S_X = \mathrm{cone}(\cC) \cap S_X \). By items (i) and (ii), we have \( y \in \mathrm{cone}(\cC_t) \cap t S_X = \cC_t \), which completes the proof of the identity.

We now prove the final assertion. Fix \( x \in X \) such that \( \|x\| > I_{\cC} \). Since \( \cC \subset C \), we only need to prove that \( x \in C \) implies \( x \in \cC \). So assume \( x \in C \) with \( \|x\| = t > I_{\cC} \). Then \( \frac{x}{\|x\|} \in C \cap S_X \), and by the identity just proved, \( x = t \cdot \frac{x}{\|x\|} \in t(C \cap S_X) = \cC_t \subset \cC \), as desired.

(iv) Let us first prove the inclusion \( \mathrm{bd}(\cC) \cap t S_X \subset \mathrm{bd}(C) \cap t S_X \). Fix \( y \in \mathrm{bd}(\cC) \cap t S_X \), and take \( \varepsilon > 0 \) such that \( B_X(y, t\varepsilon) \cap B_X(0_X, I_{\cC}) = \emptyset \). Since \( y \in \mathrm{bd}(\cC) \), there exists \( y' \in \cC \cap B_X(y, t\varepsilon) \) such that \( \frac{y'}{t} \in C \) and \( \left\| \frac{y'}{t} - \frac{y}{t} \right\| < \varepsilon \). On the other hand, the condition \( B_X(y, t\varepsilon) \cap B_X(0_X, I_{\cC}) = \emptyset \) implies that every \( x'' \in \cC^c \cap B_X(y, t\varepsilon) \) satisfies \( \|x''\| > I_{\cC} \), and thus, by the last assertion in (iii), \( x'' \in C^c \). Therefore, there exists \( x'' \in C^c \cap B_X(y, t\varepsilon) \) such that \( \left\| \frac{x''}{t} - \frac{y}{t} \right\| < \varepsilon \) and \( \frac{x''}{t} \notin C \). This shows that \( \frac{y}{t} \in \mathrm{bd}(C) \cap S_X \), and hence
\[
y \in t(\mathrm{bd}(C) \cap S_X) = \mathrm{bd}(C) \cap t S_X,
\]
since \( t \cdot \mathrm{bd}(C) = \mathrm{bd}(C) \) for every cone \( C \subset X \) and every \( t > 0 \).

 Now we  prove the  inclusion $t(\mbox{bd}(C) \cap S_X)\subset \mbox{bd}(\cC) \cap t S_X$. Fix $y \in  \mbox{bd}(C) \cap S_X$, it is sufficient to check that $ty \in  \mbox{bd}(\cC)$.  Fix $\varepsilon >0$, there exists $y' \not\in C$ such that $\| y'-y \| <\frac{\varepsilon}{t}$. Then, $\| ty'-ty \|<\varepsilon$ and $ty' \not\in  C$. Now,  the inclusion   $ \cC \subset C$ yields $ty' \not\in \cC$. On the other hand, since $y \in \mbox{bd}(C)$, there exists $x'' \in C$ such that $\| x''-y \| <\frac{\varepsilon}{3t}$. Now, we consider the following cases.
\begin{itemize}
\item[(a)] Case $\| x'' \|=1$. Then, $tx'' \in C\cap tS_X=\mbox{cone}(\cC_t) \cap tS_X= \cC_t \subset \cC $.  Then, $tx'' \in \cC $ and $\|tx''-ty \| <\varepsilon$.
\item[(b)] Case $\| x'' \|<1$. Denote $u=\frac{x''}{\|x''\|}$. We will check first the inequality $\|tu-ty \| <\varepsilon$ and later that $tu \in \cC $.  Consider $v:=u-x''$. Then $\| v\|=1-\|x''\| \leq 1-(1-\frac{\varepsilon}{3t})=\frac{\varepsilon}{3t}$, because $\| x''\| \geq \|y\|-\|y-x''\|\geq 1-\frac{\varepsilon}{3t}$.  As a consequence,  $\|u-y\|=\|x''+v-y \| \leq \|y-x''\| +\|v\|\leq \frac{\varepsilon}{3t}+ \frac{\varepsilon}{3t}=\frac{2\varepsilon}{3t}<\frac{\varepsilon}{t}$.  Then $\|tu-ty\| <\varepsilon$.  For the last claim; since $u \in C\cap S_X $ and by (iii), it follows that $tu \in t(C \cap S_X)= \cC_t \subset \cC$.
\item[(c)] Case $\| x'' \|>1$. Denoting again $u=\frac{x''}{\|x''\|}$, the proof is analogous to the previous case but using $\| x''\| \leq 1+\frac{\varepsilon}{3t}$ and $\|u-x''\|=\|x''\|-1$. 
\end{itemize}
To sum up,  given arbitrary $y \in  \mbox{bd}(C) \cap S_X$ and $\varepsilon >0$, there exist $y'  \in \cC $  and $x''  \not\in \cC$ such that $\| y' -ty \|  <\varepsilon $ and $\| x'' -ty\|  <\varepsilon $. Then, $ty \in  \mbox{bd}(\cC) \cap t S_X$.  As a consequence, we have $\mbox{bd}(C) \cap tS_X=t(\mbox{bd}(C) \cap S_X)\subset \mbox{bd}(\cC) \cap t S_X$.
\end{proof} 

\begin{prop} \label{SSPcoradiantAmpliado}
Let $X$ be a normed space, and let $\cC$ and $\cK$ be co-radiant subsets of $X$. Assume that $\cK$ admits a norm base,  fix some $t>I_{\cK}$ and take $\cK_t=\cK \cap tS_X$. The following are equivalent.
\begin{itemize}
\item[(i)] The pair $(\mbox{cone}(\cC),\mbox{cone}(\cK_t))$ satisfies the SSP.
\item[(ii)] $0_X \not\in \overline{co(\mbox{cone}(\cC) \cap t S_X)-\mbox{co}((   bd(\mbox{cone}(\cK_t))\cap tS_X) \cup \{ 0_X \})}$.
\end{itemize}
\end{prop}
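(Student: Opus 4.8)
The plan is to recognize that this proposition is, at heart, a statement about how the SSP condition behaves under the homothety $x \mapsto tx$. Unwinding Definition \ref{SSPconos} for the cones $C := \mbox{cone}(\cC)$ and $L := \mbox{cone}(\cK_t)$, assertion (i) reads by definition as
$$0_X \notin \overline{\mbox{co}(C \cap S_X) - \mbox{co}((\mbox{bd}(L) \cap S_X) \cup \{0_X\})},$$
which is literally assertion (ii) with the unit sphere $S_X$ replaced by the sphere $tS_X$ of radius $t$. Thus the whole task reduces to transferring a $0_X$-membership statement between the ``radius $1$'' and ``radius $t$'' versions of the same construction, and the natural tool is the linear homeomorphism $h_t : X \to X$, $h_t(x) = tx$, which fixes $0_X$ and satisfies $h_t(S_X) = tS_X$.

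First I would record the scaling identities for the two ingredient sets. Since $C$ is a cone, for $x$ with $\|x\| = t$ one has $x/t \in C \cap S_X$, and conversely, whence $C \cap tS_X = t(C \cap S_X)$. For the second set, I would first note that the boundary of a cone is again a cone: for $\lambda > 0$ the map $h_\lambda$ is a homeomorphism carrying $L$ onto $L$ (as $\lambda L = L$ for a cone $L$ and $\lambda>0$), hence carrying $\mbox{bd}(L)$ onto $\mbox{bd}(L)$, so $\lambda\,\mbox{bd}(L) = \mbox{bd}(L)$. The same sphere argument as before then gives $\mbox{bd}(L) \cap tS_X = t(\mbox{bd}(L) \cap S_X)$.

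Next I would push these identities through the remaining operations. The convex hull commutes with positive scaling, $\mbox{co}(tA) = t\,\mbox{co}(A)$, and since $t \cdot 0_X = 0_X$ we also get $\mbox{co}((tA) \cup \{0_X\}) = t\,\mbox{co}(A \cup \{0_X\})$; moreover set difference scales as $tA - tB = t(A - B)$, and closure commutes with the homeomorphism $h_t$, i.e.\ $\overline{tM} = t\overline{M}$. Combining these, the set appearing in (ii) equals
$$t\left[\,\overline{\mbox{co}(C \cap S_X) - \mbox{co}((\mbox{bd}(L) \cap S_X) \cup \{0_X\})}\,\right],$$
that is, exactly $t$ times the set whose non-containment of $0_X$ defines the SSP of $(C,L)$. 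Since $t > 0$ and $t \cdot 0_X = 0_X$, the point $0_X$ belongs to one of these sets if and only if it belongs to the other, which yields the equivalence (i) $\Leftrightarrow$ (ii).

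The argument is essentially routine scaling, so there is no serious obstacle; the only points demanding a line of justification are the claim that the boundary of a cone is a cone (needed for the boundary set to scale correctly) and the identity $\mbox{co}((tA)\cup\{0_X\}) = t\,\mbox{co}(A\cup\{0_X\})$, both of which are elementary. It is worth emphasizing that the standing hypotheses --- that $\cK$ admits a norm base and that $t > I_{\cK}$ --- are used only to guarantee that $\cK_t = \cK \cap tS_X$ is a bona fide (nonempty) norm base so that $\mbox{cone}(\cK_t)$ is the intended object; they play no role in the scaling equivalence itself.
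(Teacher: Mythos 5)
Your proposal is correct and follows essentially the same route as the paper: both proofs observe that, since $t\cdot 0_X=0_X$ with $t>0$, membership of $0_X$ in the SSP set for $(\mathrm{cone}(\cC),\mathrm{cone}(\cK_t))$ is unchanged under the homothety $x\mapsto tx$, and that this homothety carries that set onto the set in (ii). The only difference is that you spell out the scaling identities (for $C\cap tS_X$, $\mathrm{bd}$, $\mathrm{co}$, and closure) that the paper dismisses with ``it is not difficult to show.''
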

\begin{proof}
In order to simplify the notation, we will denote $C=\mbox{cone}(\cC)$ and $K_t=\mbox{cone}(\cK_t)$. Let us show (i)$\Leftrightarrow$(ii) directly. By definition, the pair $(C,K_t)$ satisfies the SSP if and only if
\[0_X \not \in \overline{\mbox{co}(C\cap S_X)-\mbox{co}((\mbox{bd}(K_t)\cap S_X)\cup\{0_X\}))}\Leftrightarrow\]
\[0_X \not \in t\overline{\mbox{co}(C\cap S_X)-\mbox{co}((\mbox{bd}(K_t)\cap S_X)\cup\{0_X\}))}, \mbox{ for any } t>0.\]
Now, it is not difficult to show that the former set coincides with the following one  $\overline{\mbox{co}(C \cap t S_X)-\mbox{co}((   bd(K_t)\cap tS_X) \cup \{ 0_X \})}$, and the proof finishes.
\end{proof}

When both co-radiant sets $\cC$ and $\cK$ admit a norm base,  the precedent result becomes the following one.  

\begin{prop} \label{teoremadelosSSP}
Let $X$ be a normed space, let $\cC$ and $\cK$  be co-radiant sets of $X$ admitting a norm base, and  $t>\max\{I_{\cC},I_{\cK}\}$.The following are equivalent.
\begin{itemize}
\item[(i)] The pair $(\mbox{cone}(\cC),\mbox{cone}(\cK))$ satisfies the SSP,
\item[(ii)] The pair $(\mbox{cone}(\cC_t),\mbox{cone}(\cK_t))$ satisfies the SSP,
\item[(iii)]  $0_X\not \in \overline{\mbox{co}(\cC_t)-\mbox{co}((\mbox{bd}(\cK)\cap tS_X) \cup \{ 0_X \})}$.
\end{itemize}
\end{prop}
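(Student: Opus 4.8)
The plan is to deduce both equivalences from the already-established Proposition \ref{SSPcoradiantAmpliado} together with Lemma \ref{Lemma_antes_claims}, the key observation being that at radius $t>\max\{I_{\cC},I_{\cK}\}$ the norm bases generate exactly the same cones as the original co-radiant sets, and that the co-radiant sets agree with their generated cones on the sphere $tS_X$. Thus essentially no new analytic content is needed: the proof is a matter of invoking the correct items of the lemma and rewriting sets.

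First I would settle (i) $\Leftrightarrow$ (ii). Applying Lemma \ref{Lemma_antes_claims}(i) to $\cC$ (valid since $t>I_{\cC}$) gives $\mbox{cone}(\cC_t)=\mbox{cone}(\cC)$, and applying the same item to $\cK$ (valid since $t>I_{\cK}$) gives $\mbox{cone}(\cK_t)=\mbox{cone}(\cK)$. Consequently the pair of cones appearing in (ii) is literally the pair appearing in (i), so the two statements coincide and the equivalence is immediate.

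Next I would prove (ii) $\Leftrightarrow$ (iii). Using again $\mbox{cone}(\cC_t)=\mbox{cone}(\cC)$, statement (ii) is exactly the assertion that $(\mbox{cone}(\cC),\mbox{cone}(\cK_t))$ satisfies the SSP. Since $\cK$ admits a norm base and $t>I_{\cK}$, Proposition \ref{SSPcoradiantAmpliado} applies verbatim and translates this into the nonseparation condition
\[0_X\not\in\overline{\mbox{co}(\mbox{cone}(\cC)\cap tS_X)-\mbox{co}((\mbox{bd}(\mbox{cone}(\cK_t))\cap tS_X)\cup\{0_X\})}.\]
It then remains only to identify the two sets inside this closure with those in (iii). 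For the first set I would combine Lemma \ref{Lemma_antes_claims}(ii) with (i) for $\cC$ to obtain $\mbox{cone}(\cC)\cap tS_X=\mbox{cone}(\cC_t)\cap tS_X=\cC_t$. For the second set I would combine $\mbox{cone}(\cK_t)=\mbox{cone}(\cK)$ with Lemma \ref{Lemma_antes_claims}(iv) for $\cK$ to obtain $\mbox{bd}(\mbox{cone}(\cK_t))\cap tS_X=\mbox{bd}(\mbox{cone}(\cK))\cap tS_X=\mbox{bd}(\cK)\cap tS_X$. Substituting these two identities turns the displayed condition into precisely $0_X\not\in\overline{\mbox{co}(\cC_t)-\mbox{co}((\mbox{bd}(\cK)\cap tS_X)\cup\{0_X\})}$, i.e. statement (iii), completing the chain (i) $\Leftrightarrow$ (ii) $\Leftrightarrow$ (iii).

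The argument is mostly bookkeeping, since the separation machinery has been absorbed into the preceding results; the only delicate point is the boundary identity $\mbox{bd}(\mbox{cone}(\cK_t))\cap tS_X=\mbox{bd}(\cK)\cap tS_X$, which is exactly the content of Lemma \ref{Lemma_antes_claims}(iv). This is where I would take most care, making sure the hypothesis $t>I_{\cK}$ is genuinely used so that $\cK$ and $\mbox{cone}(\cK)$ coincide on the sphere of radius $t$, and that the roles of $\cC$ and $\cK$ are not interchanged (note that (iii) uses $\cC_t$ for the first set but the boundary of $\cK$, not $\cK_t$, for the second).
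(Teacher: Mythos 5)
Your proposal is correct and follows essentially the same route as the paper, which likewise dispatches (i)$\Leftrightarrow$(ii) via Lemma \ref{Lemma_antes_claims}(i) and (ii)$\Leftrightarrow$(iii) via Proposition \ref{SSPcoradiantAmpliado} together with items (i), (ii), and (iv) of that lemma; you have merely written out the set identities that the paper leaves implicit. The bookkeeping, including the boundary identity $\mathrm{bd}(\mathrm{cone}(\cK_t))\cap tS_X=\mathrm{bd}(\cK)\cap tS_X$, is handled correctly.
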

\begin{proof}  
Proposition \ref{SSPcoradiantAmpliado} and assertion (i) in Lemma \ref{Lemma_antes_claims} give (i)$\Leftrightarrow$(ii).  For (ii)$\Leftrightarrow$(iii) we apply Proposition \ref{SSPcoradiantAmpliado} and assertions (i), (ii), and (iv) in Lemma~\ref{Lemma_antes_claims} 
\end{proof}

\section{Separation Theorems for Co-Radiant Sets} \label{Subsection:Proofs_Separation_Theorems}
In this section, we present several separation theorems for a couple of co-radiant sets $\mathcal{C}$ and $\mathcal{K}$.   In essence,  we establish that if the pair $(\mbox{cone}(\cC),\mbox{cone}(\cK))$ satisfies the SSP,  then there exists a co-radiant set of Bishop-Phelps type in the form of $C(f, \alpha, \lambda)=\{x\in X\colon f(x)- \alpha \|x\|\geq \lambda \}$ (for some $f\in X^*$, $\alpha>0$, and $\lambda >0$) separating the sets $\overline{\text{co}}(\mathcal{C})$ and $\mbox{bd}(\cK(\eta))$, for $\eta>0$ small enough. 
It is worth pointing out that the parameter $\eta>0$ represents a small positive parameter that allows the extension of the set $\cK$ towards the origin to enable the separation of  $\text{bd}(\cK(\eta))$ from $\overline{\text{co}}(\cC)$ by means of the hyperbolic surface $\{x\in X\colon f(x)- \alpha \| x \|=\lambda\}$. Furthermore, we establish that under a more restrictive assumption on the relative position between $\cC$ and $\cK$, it is possible to separate the sets $\overline{\text{co}}(\mathcal{C})$ and $X\setminus \text{int}(\mathcal{K}(\eta))$ by the same set $C(f, \alpha, \lambda)$.  From now on,  given a co-radiant set $\cC\subset X$, we will denote $d_{\cC}:=\inf \{ \| c\| : c \in \cC \}\geq0$. It is clear that $0\leq d_{\cC}\leq I_{\cC}$.

\begin{theo} \label{teoolarioseparacioncoradiantes2}
Let \( X \) be a normed space, and let \( \cC\) and \(\cK \) be co-radiant subsets of $X$ such that \( d_{\cC} d_{\cK} > 0 \) and \( \cK \) admits a norm base. Consider the following assertions:
\begin{itemize}
\item[(i)] The pair \( (\mathrm{cone}(\cC), \mathrm{cone}(\cK)) \) satisfies the SSP.
\item[(ii)] There exist \( \alpha_1, \alpha_2 \in \mathbb{R} \) with \( 0 < \alpha_1 < \alpha_2 \), and \( f \in S_{X^*} \) such that:
\begin{itemize}
\item[(a)] \( \lambda := \inf\{ f(y) - \alpha \| y \| : y \in \cC,\, \alpha \in (\alpha_1, \alpha_2) \} > 0 \),
\item[(b)] for every \( 0 < \eta < \frac{\lambda}{I_{\cK}} \), the inequalities
\begin{equation}\label{separacioncoradiant2_bis}
f(y') - \alpha \| y' \| < \lambda < f(y) - \alpha \| y \|,
\end{equation}
hold for all \( \alpha \in (\alpha_1, \alpha_2) \), \( y' \in \mathrm{bd}(\cK(\eta)) \), and \( y \in \overline{\mathrm{co}}(\cC) \).
\end{itemize}

\end{itemize}
Then, \( \text{(i)} \Rightarrow \text{(ii)} \). Moreover, if \( \cC \) also admits a norm base, then \( \text{(ii)} \Rightarrow \text{(i)} \).
\end{theo}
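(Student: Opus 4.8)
The plan is to run both implications through the characterization of the SSP in Theorem~\ref{Tma1_sep_conos_MMOR}, after rewriting its inequalities in a form adapted to $\mbox{cone}(\cC)$ and $\mbox{cone}(\cK)$. Writing $x=-c$ with $c\in\mbox{cone}(\cC)\setminus\{\cero\}$ and $y=-k$ with $k\in\mbox{bd}(\mbox{cone}(\cK))\setminus\{\cero\}$ (using $\mbox{bd}(-K)=-\mbox{bd}(K)$), the conclusion of Theorem~\ref{Tma1_sep_conos_MMOR} becomes: there are $0<\alpha_1'<\alpha_2'$ and $f\in X^*$ with $f(c)-\alpha\|c\|>0$ and $f(k)-\alpha\|k\|<0$ for all $\alpha\in(\alpha_1',\alpha_2')$, all such $c$, and all such $k$. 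Since this forces $f$ to be non-zero, I may rescale $f$ and the interval simultaneously so that $f\in S_{X^*}$, keeping both inequalities and the positivity of the endpoints. I will use throughout that $\cK(\eta)=\eta\cK$ is co-radiant, admits a norm base whenever $\cK$ does, and satisfies $I_{\cK(\eta)}=\eta I_{\cK}$, together with $I_{\cK}\ge d_{\cK}>0$.

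For (i)$\Rightarrow$(ii): after the normalization above, fix $\alpha_1,\alpha_2$ with $\alpha_1'<\alpha_1<\alpha_2<\alpha_2'$ and an auxiliary $\alpha^*\in(\alpha_2,\alpha_2')$. For $y\in\cC$ (so $y\neq\cero$ and $\|y\|\ge d_{\cC}$) I decompose $f(y)-\alpha\|y\|=(f(y)-\alpha^*\|y\|)+(\alpha^*-\alpha)\|y\|$; the first term is positive and the second is at least $(\alpha^*-\alpha_2)d_{\cC}>0$, which yields $\lambda\ge(\alpha^*-\alpha_2)d_{\cC}>0$ and establishes (a). For the right-hand inequality in (b), since $\lambda=\inf_{y\in\cC}(f(y)-\alpha_2\|y\|)$, an analogous decomposition gives $f(y)-\alpha\|y\|\ge\lambda+(\alpha_2-\alpha)d_{\cC}>\lambda$ for $y\in\cC$ and $\alpha\in(\alpha_1,\alpha_2)$. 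As $y\mapsto f(y)-\alpha\|y\|$ is concave and continuous, this lower bound propagates first to $\mbox{co}(\cC)$ by concavity and then to $\overline{\mbox{co}}(\cC)$ by continuity, giving $\lambda<f(y)-\alpha\|y\|$ on $\overline{\mbox{co}}(\cC)$.

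The left-hand inequality in (b) is where the real work lies, and I expect it to be the main obstacle, because $\mbox{bd}(\cK(\eta))$ is not directly controlled by the cone inequality $f(k)-\alpha\|k\|<0$. I would split $y'\in\mbox{bd}(\cK(\eta))$ according to its norm. If $\|y'\|>\eta I_{\cK}=I_{\cK(\eta)}$, then Lemma~\ref{Lemma_antes_claims}(iv) applied to $\cK(\eta)$ (whose generated cone is $\mbox{cone}(\cK)$) places $y'$ on $\mbox{bd}(\mbox{cone}(\cK))\setminus\{\cero\}$, so $f(y')-\alpha\|y'\|<0<\lambda$. If $\|y'\|\le\eta I_{\cK}$, I bound crudely, using $\alpha>0$ and $\|f\|_*=1$, by $f(y')-\alpha\|y'\|\le f(y')\le\|y'\|\le\eta I_{\cK}<\lambda$, where the last strict inequality is exactly the constraint $\eta<\lambda/I_{\cK}$. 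This completes (ii).

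For the converse (ii)$\Rightarrow$(i), assuming $\cC$ also admits a norm base, I will verify the reformulated hypotheses of Theorem~\ref{Tma1_sep_conos_MMOR} with the same $f$ and interval $(\alpha_1,\alpha_2)$. For $c\in\mbox{cone}(\cC)\setminus\{\cero\}$, I rescale $c$ to norm $>I_{\cC}$; by Lemma~\ref{Lemma_antes_claims}(iii) the rescaled point lies in $\cC$, where $f(\cdot)-\alpha\|\cdot\|>\lambda>0$, and positive homogeneity returns $f(c)-\alpha\|c\|>0$. For $k\in\mbox{bd}(\mbox{cone}(\cK))\setminus\{\cero\}$, fix any admissible $\eta$ and write $u=k/\|k\|$; since $\mbox{bd}(\mbox{cone}(\cK))$ is a cone, $su\in\mbox{bd}(\mbox{cone}(\cK))\cap sS_X=\mbox{bd}(\cK(\eta))\cap sS_X$ for $s>\eta I_{\cK}$ by Lemma~\ref{Lemma_antes_claims}(iv), so the left inequality of (b) gives $s(f(u)-\alpha)<\lambda$ for all large $s$; letting $s\to\infty$ forces $f(u)\le\alpha$ for every $\alpha\in(\alpha_1,\alpha_2)$, hence $f(u)\le\alpha_1$. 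The delicate point here is recovering a strict inequality from a non-strict limit: I regain it from the openness of the interval, since for any $\beta\in(\alpha_1,\alpha_2)$ one has $f(u)-\beta\le\alpha_1-\beta<0$, and homogeneity yields $f(k)-\beta\|k\|<0$. With both cone inequalities in hand on $(\alpha_1,\alpha_2)$, Theorem~\ref{Tma1_sep_conos_MMOR} gives the SSP for $(\mbox{cone}(\cC),\mbox{cone}(\cK))$.
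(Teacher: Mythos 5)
Your proof is correct, but it follows a genuinely different --- and leaner --- route than the paper's. The crucial divergence is in how you obtain $\lambda>0$ in (i)$\Rightarrow$(ii): the paper first enlarges $\mathrm{cone}(\cC)$ to a $\delta$-conic neighbourhood (Lemma \ref{lema:cono_ampliado_SSP}), applies Theorem \ref{Tma1_sep_conos_MMOR} to the enlarged pair, and then rules out $\lambda=0$ by a contradiction argument built on the perturbation Lemmas \ref{lemadelmu_por_alfa} and \ref{lemadelnu}; you instead apply Theorem \ref{Tma1_sep_conos_MMOR} to the original pair and exploit the openness of the $\alpha$-interval, trading the room above $\alpha_2$ for the explicit uniform bound $\lambda\geq(\alpha^*-\alpha_2)d_{\cC}>0$. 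That interval-shrinking trick makes the conic-neighbourhood apparatus unnecessary here, and the same margin $(\alpha_2-\alpha)d_{\cC}$ then carries the strict inequality over $\mathrm{co}(\cC)$ and its closure by concavity plus continuity, where the paper argues strictness twice by contradiction with sequences. Your handling of $\mathrm{bd}(\cK(\eta))$ (split at the threshold $I_{\cK(\eta)}=\eta I_{\cK}$ via Proposition \ref{SSPepsiloncoradiant}, Lemma \ref{Lemma_antes_claims}(iv) above it, and the bound $f(y')-\alpha\|y'\|\leq\|y'\|\leq\eta I_{\cK}<\lambda$ below it) matches the paper's argument in substance. In (ii)$\Rightarrow$(i) you again diverge: the paper reduces the SSP to the set condition of Proposition \ref{teoremadelosSSP} and derives a contradiction from sequences with $\|y_n-y'_n\|\to 0$, whereas you verify the two strict inequalities of Theorem \ref{Tma1_sep_conos_MMOR} directly --- rescaling points of $\mathrm{cone}(\cC)$ into $\cC$ by Lemma \ref{Lemma_antes_claims}(iii), and sending $s\to\infty$ along $\mathrm{bd}(\cK(\eta))$ to get $f(u)\leq\alpha_1$ on $\mathrm{bd}(\mathrm{cone}(\cK))\cap S_X$, with strictness recovered from the openness of $(\alpha_1,\alpha_2)$. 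What the paper's heavier route buys is the stronger by-product that $f$ is strictly positive on a dilation of $\mathrm{cone}(\cC)$, which is the mechanism it reuses for Theorem \ref{thm:separacion_para_C_y_K_punto_comun}; what yours buys is economy --- only Theorem \ref{Tma1_sep_conos_MMOR}, Lemma \ref{Lemma_antes_claims} and Proposition \ref{SSPepsiloncoradiant} are needed --- together with a quantitative lower bound for $\lambda$.
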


Next, we will establish some auxiliary results that will allow us to prove Theorem  \ref{teoolarioseparacioncoradiantes2}.  
\begin{prop}  \label{SSPepsiloncoradiant} 
Let $X$ be a normed space, let $\cK \subset X$ be a co-radiant subset, and fix $\eta>0$.  If $\cK$ admits a norm base, then $\cK(\eta)$ also has it. Furthermore, for any $t>0$ we have the equivalence $t>I_{\cK}\Leftrightarrow \eta t>I_{\cK(\eta)}$. As a consequence, we have the equality $I_{\cK(\eta)}=\eta I_{\cK}$.
\end{prop}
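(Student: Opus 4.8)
The plan is to reduce the whole statement to one elementary scaling identity for norm bases and then feed it through the definition of $I_{\cK}$ as an infimum. First I would note that $\cK(\eta)=\eta\cK$ is again co-radiant and record the identity
\[
(\cK(\eta))_{\eta t}=\eta\,\cK_t\qquad(t>0),
\]
which is immediate from the definition of a norm base: writing a point of $\cK(\eta)$ as $z=\eta y$ with $y\in\cK$, the condition $\|z\|=\eta t$ is equivalent to $\|y\|=t$, so $z\in(\cK(\eta))_{\eta t}$ iff $y\in\cK_t$. I would just check the two inclusions.

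The core step is the transfer of the norm-base property: $\cK_t$ is a norm base of $\cK$ if and only if $(\cK(\eta))_{\eta t}$ is a norm base of $\cK(\eta)$. For the direct implication, take $z\in\cK(\eta)$, write $z=\eta y$ with $y\in\cK$, use the hypothesis to produce $\lambda>0$ and $y_t\in\cK_t$ with $y=\lambda y_t$, and conclude $z=\lambda(\eta y_t)$ with $\eta y_t\in\eta\cK_t=(\cK(\eta))_{\eta t}$ by the identity above. The converse is the same computation applied to $\cK=\frac1\eta\,\cK(\eta)$ with $\eta$ replaced by $1/\eta$. Since $\cK$ admits a norm base $\cK_{t_0}$ for some $t_0>0$, this already yields that $(\cK(\eta))_{\eta t_0}$ is a norm base of $\cK(\eta)$, i.e. $\cK(\eta)$ admits one.

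Next I would deduce the equivalence directly from the transfer lemma and the characterization of the infimum, independently of the closed-form value. If $t>I_{\cK}$, then by definition of the infimum there is a radius $r<t$ for which $\cK_r$ is a norm base of $\cK$; the transfer lemma makes $(\cK(\eta))_{\eta r}$ a norm base of $\cK(\eta)$, whence $I_{\cK(\eta)}\le\eta r<\eta t$. The reverse implication is the mirror argument: from $\eta t>I_{\cK(\eta)}$ select a norm-base radius $s<\eta t$ for $\cK(\eta)$, transfer it to the norm-base radius $s/\eta$ for $\cK$, and read off $I_{\cK}\le s/\eta<t$.

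Finally the equality is an immediate consequence: putting $s=\eta t$, the equivalence says that $s>I_{\cK(\eta)}$ holds exactly when $s>\eta I_{\cK}$, so the two half-lines $(I_{\cK(\eta)},+\infty)$ and $(\eta I_{\cK},+\infty)$ coincide, forcing $I_{\cK(\eta)}=\eta I_{\cK}$. I do not anticipate a genuine obstacle; the only points requiring care are keeping the multiplier $\lambda$ and the base point straight through the two rescalings in the transfer lemma, and the correct use of the fact that $t>\inf S$ always produces an element of $S$ strictly below $t$, which is precisely what lets the equivalence be established before, and separately from, the equality.
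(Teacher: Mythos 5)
Your proof is correct and takes essentially the same route as the paper: both hinge on the scaling transfer lemma ($\cK_t$ is a norm base of $\cK$ if and only if $(\cK(\eta))_{\eta t}$ is one of $\cK(\eta)$), from which the existence of a norm base for $\cK(\eta)$, the equivalence of the strict inequalities, and the identity $I_{\cK(\eta)}=\eta I_{\cK}$ all follow. If anything, your use of the infimum characterization to produce a norm-base radius strictly below $t$ is slightly more careful than the paper's argument, which implicitly relies on $\cK_t$ itself being a norm base whenever $t>I_{\cK}$.
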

\begin{proof}
It is clear that $\cK(\eta)$ is co-radiant. Let us check that $\cK(\eta)$ admits a norm base. Fix $t>I_{\cK}$, and we will check that $\cK(\eta)_{t\eta}$ is a norm base of $\cK(\eta)$.  Fix arbitrary $y\in \cK(\eta)$. Then $y=\eta x$ for some $x \in \cK$, and there exist $z\in \cK\cap t S_X$ and $\lambda>0$ such that $x=\lambda z$. Thus, $y= \lambda \eta z$ and $\eta z\in \cK(\eta)\cap (t\eta S_X)=\cK(\eta)_{t\eta}$. So, $\cK(\eta)$ admits a norm base.  Note that our previous argument also proves the implication $t>I_{\cK}\Rightarrow t\eta>I_{\cK(\eta)}$.On the other hand,  it is straightforward to check that $\eta t>I_{\cK(\eta)}\Rightarrow$  $t>I_{\cK}$. As a consequence, $\cK_t$ is a norm base of $\cK$ if and only if $\cK(\eta)_{\eta t}$ is a norm base of $\cK(\eta)$. Therefore, 
\[
\begin{aligned}
\eta I_{\cK} &= \eta \inf\{t > 0 : \cK_t \text{ is a norm base of } \cK\} \\
&= \inf\{\eta t > 0 : \cK_t \text{ is a norm base of } \cK\} \\
&= \inf\{\eta t > 0 : \cK(\eta)_{\eta t} \text{ is a norm base of } \cK(\eta)\} \\
&= I_{\cK(\eta)}.
\end{aligned}
\]
\end{proof}

From now on, given \( f \in X^* \) and \( \alpha > 0 \), we consider the function \break \( p_{f,\alpha} : X \rightarrow [0, +\infty) \) defined by \(p_{f,\alpha}(x) := f(x) + \alpha \|x\|\), \(\forall x \in X\). Since \( f \) is linear and \( \|\cdot\| \) is a norm, it follows that \( p_{f,\alpha} \) is sublinear. The following lemma establishes some inequalities that describe the behavior of the level sets of \( p_{f,\alpha} \).

\begin{lemma} \label{lemadelmu_por_alfa}
Let  $X$ be a normed space, $f \in X^*$, $0<\alpha < \|f \|_{*} $, and $\rho>0$. The following statements hold.
\begin{itemize}
\item[(i)] For every  $0<\beta<\|f\|_{*}-\alpha$ and  $x \in X$ there exist $x_1,x_2 \in B_X(x,\rho)$ such that $p_{f,\alpha}(x_1)< p_{f,\alpha}(x)-\rho\beta   <p_{f,\alpha}(x)+\rho\beta<p_{f,\alpha}(x_2)$.
\item[(ii)] For every $x \in X$ we have the inequalities $-\|f \|_{*} \|x\| \leq p_{f,\alpha}(x) < 2 \|f \|_{*}\|x\| $.
\end{itemize}
\end{lemma}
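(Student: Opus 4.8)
The plan is to treat the two statements separately: part (ii) is an immediate consequence of the (reverse) triangle inequality, while part (i) reduces to a single well-chosen perturbation direction, with the only delicate point being that the norm term $\alpha\|\cdot\|$ is not linear.

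For (ii), I would just expand $p_{f,\alpha}(x)=f(x)+\alpha\|x\|$ and estimate $f(x)$. The lower bound follows from $f(x)\geq -\|f\|_*\|x\|$ together with $\alpha\|x\|\geq 0$, giving $p_{f,\alpha}(x)\geq -\|f\|_*\|x\|$. For the upper bound, $f(x)\leq \|f\|_*\|x\|$ yields $p_{f,\alpha}(x)\leq(\|f\|_*+\alpha)\|x\|$; since $\alpha<\|f\|_*$ we have $\|f\|_*+\alpha<2\|f\|_*$, and for $x\neq\cero$ (where $\|x\|>0$) this gives the strict inequality $p_{f,\alpha}(x)<2\|f\|_*\|x\|$, the case $x=\cero$ being degenerate as both sides vanish.

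For (i), the middle inequality $p_{f,\alpha}(x)-\rho\beta<p_{f,\alpha}(x)+\rho\beta$ is trivial since $\rho\beta>0$, so the whole content is to produce $x_1$ and $x_2$. The key idea is to perturb $x$ along a direction in which $f$ is close to its dual norm. Since $0<\beta<\|f\|_*-\alpha$ we have $\alpha+\beta<\|f\|_*=\sup_{u\in S_X}f(u)$ (the last equality because $f(-u)=-f(u)$), so I can fix a unit vector $u\in S_X$ with $f(u)>\alpha+\beta$. I then set $x_1:=x-\rho u$ and $x_2:=x+\rho u$, both lying in $B_X(x,\rho)$ because $\|x_i-x\|=\rho\|u\|=\rho$.

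It then remains to estimate $p_{f,\alpha}(x_i)-p_{f,\alpha}(x)$, and here the norm term is handled by the triangle and reverse triangle inequalities. For $x_2$, using $\|x+\rho u\|\geq\|x\|-\rho$,
\[
p_{f,\alpha}(x_2)-p_{f,\alpha}(x)=\rho f(u)+\alpha\big(\|x+\rho u\|-\|x\|\big)\geq \rho\big(f(u)-\alpha\big)>\rho\beta,
\]
and for $x_1$, using $\|x-\rho u\|\leq\|x\|+\rho$,
\[
p_{f,\alpha}(x_1)-p_{f,\alpha}(x)=-\rho f(u)+\alpha\big(\|x-\rho u\|-\|x\|\big)\leq -\rho\big(f(u)-\alpha\big)<-\rho\beta.
\]
Combined with the trivial middle inequality, this gives the required chain. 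The main (and essentially only) obstacle is recognizing that the linear gain $\rho f(u)$ from moving along $\pm u$ must dominate the worst-case loss $\alpha\rho$ produced by the nonlinear norm term; this is exactly what the hypothesis $f(u)>\alpha+\beta$ guarantees, and everything else is a direct computation.
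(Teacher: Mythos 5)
Your proof is correct and takes essentially the same route as the paper's: the same choice of direction (a vector $x'$ with $f(x')>\alpha+\beta$), the same witnesses $x_1=x-\rho x'$ and $x_2=x+\rho x'$, the only cosmetic difference being that the paper packages the estimates through the subadditivity of $p_{f,\alpha}$ while you unpack them via the triangle and reverse triangle inequalities directly. Your observation that the strict inequality in (ii) necessarily degenerates at $x=\cero$ (both sides vanish) is accurate and slightly more careful than the paper, which dismisses (ii) as immediate.
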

\begin{proof}

\begin{itemize}
\item[(i)]  Consider $x'\in B_X$ such that $f(x')>\alpha+\beta$. By the linearity of \( f \), we have \( f(-\rho x') = -\rho f(x') \), and using the absolute homogeneity of the norm, \( \| -\rho x' \| = \rho \|x'\| \leq \rho \), since \( x' \in B_X \). Therefore,
\[
p_{f,\alpha}(-\rho x') = -\rho f(x') + \alpha \rho \|x'\| \leq -\rho f(x') + \alpha \rho.
\]
Now, since \( f(x') > \alpha + \beta \), it follows that
\[
-\rho f(x') + \alpha \rho < -\rho (\alpha + \beta) + \alpha \rho = -\rho \beta.
\]
Thus, we conclude that
\[
p_{f,\alpha}(-\rho x') < -\rho \beta.
\]
Using the subadditivity of \( p_{f,\alpha} \), we deduce that
\[
p_{f,\alpha}(x - \rho x') \leq p_{f,\alpha}(x) + p_{f,\alpha}(-\rho x') < p_{f,\alpha}(x) - \rho \beta,
\]
and since \( \|x - (x - \rho x')\| = \|\rho x'\| \leq \rho \), we have \( x - \rho x' \in B_X(x, \rho) \).

On the other hand, again by the sublinearity of $p_{f,\alpha}$, we have $p_{f,\alpha}(x+\rho x')\geq p_{f,\alpha}(x)-p_{f,\alpha}(-\rho x')> p_{f,\alpha}(x)+\rho \beta$ and $x+\rho x'\in B_X(x,\rho)$. 
\item[(ii)] A direct consequence of the definition of $p_{f,\alpha}$ and the dual norm.
\end{itemize}
\end{proof}
Let us recall that given a cone $C$, we define the $\delta$-conic neighborhood of $C$ as $C_{\delta}=\mbox{cone}(\{y \in X \colon d(y, C\cap S_X)\leq \delta\})$.  These conic neighborhoods play an important role  in the proof of Theorem ~\ref{teoolarioseparacioncoradiantes2} as interpolating cones.  Given a cone $C$, we denote by $C_{\geq \alpha}$ the co-radiant set associated with $C$ that is defined by $C_{\geq \alpha}:=\{x\in C\colon \|x\|\geq \alpha\}$.

The following lemma provides a co-radiant set associated to the conic neighbourhood \(C_{\delta}\) that contains certain balls centered at a co-radiant set associated to the initial cone \(C\).

\begin{lemma} \label{lemadelnu}
Let $C \subset X$ be a cone, take $0<\delta<1$, $d>0$, and $y \in C_{\geq d}$. Then  $ y+\delta d B_X \subset (C_{\delta})_{\geq (1-\delta)d}$.
\end{lemma}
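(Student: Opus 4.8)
The plan is to check directly the two conditions defining membership in the co-radiant set $(C_{\delta})_{\geq (1-\delta)d} = \{x \in C_{\delta} : \|x\| \geq (1-\delta)d\}$. Writing an arbitrary element of $y + \delta d B_X$ as $z = y + v$ with $\|v\| \leq \delta d$, I would show separately that $\|z\| \geq (1-\delta)d$ and that $z \in C_{\delta} = \mathrm{cone}(S_{\delta})$.

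For the norm bound, the triangle inequality gives
\[
\|z\| \geq \|y\| - \|v\| \geq d - \delta d = (1-\delta)d,
\]
where I use $y \in C_{\geq d}$ (hence $\|y\| \geq d$) together with $\|v\| \leq \delta d$. Since $0<\delta<1$ and $d>0$, this forces $(1-\delta)d > 0$ and in particular $z \neq \cero$, which legitimizes the normalization by $\|y\|$ used in the next step. For membership in $C_{\delta}$, the idea is to exhibit $z$ as a nonnegative multiple of a point of $S_{\delta} = \{w \in X : d(w, C \cap S_X) \leq \delta\}$. As $\|y\| \geq d > 0$, the vector $y/\|y\|$ is a well-defined element of $C \cap S_X$, so I would bound the distance from $z/\|y\|$ to $C \cap S_X$ by comparing against this single point:
\[
d\!\left(\tfrac{z}{\|y\|},\, C \cap S_X\right) \leq \left\| \tfrac{z}{\|y\|} - \tfrac{y}{\|y\|} \right\| = \frac{\|z-y\|}{\|y\|} = \frac{\|v\|}{\|y\|} \leq \frac{\delta d}{d} = \delta,
\]
the final inequality again using $\|v\| \leq \delta d$ and $\|y\| \geq d$. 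Hence $z/\|y\| \in S_{\delta}$, and the decomposition $z = \|y\| \cdot (z/\|y\|)$ with $\|y\| \geq 0$ shows $z \in \mathrm{cone}(S_{\delta}) = C_{\delta}$. Combining the two parts gives $z \in (C_{\delta})_{\geq(1-\delta)d}$.

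This is essentially a one-line normalization estimate, so I do not expect a genuine obstacle. The only point demanding attention is that the hypothesis $\|y\| \geq d$ is invoked twice — first to ensure $y/\|y\| \in C \cap S_X$ is defined, and second to pass from $\|v\|/\|y\| \leq \delta d/\|y\|$ to the clean bound $\delta$ — and that one must record $z \neq \cero$ so the normalization is valid.
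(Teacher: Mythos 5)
Your proof is correct and takes essentially the same approach as the paper: the paper notes $\tfrac{y}{\|y\|}+\delta B_X\subset C_{\delta}$ and scales by $\|y\|\geq d$ using the cone property, which is exactly your pointwise normalization estimate $d\bigl(\tfrac{z}{\|y\|},\,C\cap S_X\bigr)\leq \tfrac{\|v\|}{\|y\|}\leq\delta$, and both proofs finish with the identical triangle-inequality bound $\|z\|\geq d-\delta d=(1-\delta)d$.
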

\begin{proof}
As $y \in  C_{\geq d}$ it follows that $\frac{y}{\|y\|}+\delta B_X \subset C_{\delta} $. Since $C_{\delta}$ is a cone, multiplying by $\|y\|$, we get $y+ \delta \|y\| B_X \subset C_{\delta}\Rightarrow y+ \delta d B_X \subset C_{\delta}$ because $ \| y \|\geq d$. Furthermore, if $y' \in y+ \delta d B_X $, then 
$\|y' \|=\|y' -y+y\|\ \geq \|y\|- \|y'-y\| \geq d-\delta d=(1-\delta)d$, giving
$y' \in (C_{\delta})_{\geq (1-\delta)d}$.
\end{proof}

Now, we can prove our first separation theorem of co-radiant sets.
\begin{proof}[Proof of Theorem \ref{teoolarioseparacioncoradiantes2}]

(i)$\Rightarrow$(ii)  Assume that the pair $(\mbox{cone}(\cC),\mbox{cone}(\cK))$ satisfies the SSP and denote $K=\mbox{cone}(\cK)$ and $C=\mbox{cone}(\cC)$.  By Lemma \ref{lema:cono_ampliado_SSP}, there exists $0<\delta <1$ such that the pair $(C_{\delta},K)$ satisfies the SSP. Now, by Theorem \ref{Tma1_sep_conos_MMOR}, there exist \( \alpha_1, \alpha_2 \in \mathbb{R} \) with \( 0 < \alpha_1 < \alpha_2 \), and a functional \( f \in (C_{\delta})^{\#} \) such that \( (f, \alpha) \in (C_{\delta})^{a\#}_+ \) for every \( \alpha \in (\alpha_1, \alpha_2) \), and
\begin{equation}\label{eqsepara}
p_{f,\alpha}(y) < 0 < p_{f,\alpha}(y'),
\end{equation}
for every \( y \in -C_{\delta} \setminus \{ \cero \} \) and \( y' \in \mbox{bd}(-K) \setminus \{ \cero \} \). Furthermore, $f\in \cC^{\#}$ because $f\in (C_{\delta})^{\#} \subset  C^{\#}$ and $ \cC \subset C \setminus \{0_X\}$.  On the other hand, without loss of generality, we may assume that $f\in S_{X^*}$. Define $\lambda:=\inf\{f(y)- \alpha \| y \|: y \in \cC, \alpha \in (\alpha_1, \alpha_2) \}$, we will prove that $\lambda>0$ or, equivalently, that $-\lambda=\sup\{p_{f,\alpha}(y): y \in -\cC, \alpha \in (\alpha_1, \alpha_2) \}<0$. The inequality $-\lambda \leq 0$ is a direct consequence of (\ref{eqsepara}) because $-\cC \subset -C_{\delta} \setminus\{\cero\}$. Now assume, contrary to our claim, that $ \lambda=0$.   For every $\alpha \in (\alpha_1,\alpha_2)$, we have $(f,\alpha)\in  (C_{\delta})^{a\#}_+\subset C^{a\#}_+$,  which implies  $ \alpha<1$ as $f$ is taken in $S_{X^*}$. Then $\alpha_2 \leq 1$. However, without loss of generality, we may suppose that $\alpha_2<1$ because otherwise we can redefine $\alpha_2\in (\alpha_1,1)$. Now, consider $\mu=\frac{1-\alpha_2}{2}\delta d_{\cC}>0$. By the assumption $ \lambda=0$, there exist $ y_{0} \in -\cC \subset -C_{\geq d_{\cC}}$ and $\alpha_0 \in (\alpha_1, \alpha_2) $ such that 
\begin{equation} \label{eq12}
-\mu<p_{f,\alpha_0}(y_0)<0.
\end{equation}
Then, we apply Lemma \ref{lemadelmu_por_alfa} (i) for $\beta=\frac{1-\alpha_2}{2}$, $\rho=\delta d_{\cC}$, $\alpha=\alpha_2$ and $\|f\|_{*}$ and we get that there exists some $y_1 \in y_{0}+ \delta  d_{\cC}  B_X$ such that
\begin{equation}\label{eq22}
 p_{f,\alpha_0}(y_0)+\mu<p_{f,\alpha_0}(y_1).
\end{equation} 
Now, Lemma \ref{lemadelnu}  guarantees that $y_1\in -(C_{\delta})_{\geq (1-\delta)d_{\cC}}$. On the other hand, from (\ref{eq12}) and (\ref{eq22}) we get 
\begin{equation}\label{eq32}
0 =-\mu+\mu<p_{f,\alpha_0}(y_0)+\mu<p_{f,\alpha_0}(y_1).
\end{equation}
But $y_1  \in  -(C_{\delta})_{\geq (1-\delta)d_{\cC}}   \subset -C_{\delta}$, which contradicts (\ref{eqsepara}) if $y_1\not =0_X$. On the other hand, (\ref{eq32}) is impossible in case $y_1=0_X$. Therefore, we conclude that $\lambda>0$.  

We now proceed to show the second inequality in (\ref{separacioncoradiant2_bis}), i.e., the inequality $ f(y)- \alpha\|y\|>\lambda$,  for every $\alpha \in (\alpha_1, \alpha_2)$ and $y\in \overline{\mbox{co}}(\cC)$.  We now claim that $f(y)- \alpha \| y \|>\lambda$ for every $\alpha \in (\alpha_1, \alpha_2)$ and $y \in \cC$. Indeed, otherwise, there would exist $\bar{y} \in \cC$ and $\bar{\alpha} \in (\alpha_1, \alpha_2)$ such that $\lambda=f(\bar{y})- \bar{\alpha} \| \bar{y} \| $. Now, taking some  $\bar{\alpha}< \hat{\alpha}< \alpha_2$,  we get that $f(\bar{y})- \hat{\alpha} \| \bar{y} \|<\lambda $, contradicting the definition of $\lambda$.  Then, using the superlinearity of each funtcion $f(\cdot)-\alpha \parallel \cdot \parallel$ guarantees that $f(y)- \alpha \| y \|>\lambda$ for every $\alpha \in (\alpha_1, \alpha_2)$ and $y\in \mbox{co}(\cC)$.  Finally, consider $y \in \mbox{bd}(\mbox{co}(\cC))$.  Then, we pick a sequence $\{y_n \} \subset \mbox{co}(\cC)$ with $ \lim_n  y_n =y$. Suppose, by contradiction, that there exists  $\bar{\alpha} \in (\alpha_1, \alpha_2)$ such that $\lambda=f(y)- \bar{\alpha} \| y \| $.
We fix now some $ \alpha' \in (\bar{\alpha}, \alpha_2)$, and then we get $\lim_n(f(y_n)- \alpha ' \| y_n \|) =\lim_n( f(y_n)- \bar{\alpha}  \| y_n \|  - (\alpha'- \bar{\alpha} )  \| y_n \| )=f(y )-\bar{\alpha}  \| y \|-    (\alpha'- \bar{\alpha} )  \| y \| =\lambda -(\alpha'- \bar{\alpha} )  \| y \| <\lambda$. Consequently, there exists $n_0 \in \mathbb{N}$ such that   $ f(y_{n_0})- \alpha ' \| y_{n_0} \| <\lambda $ which is impossible because $y_{n_0}\in \mbox{co}(\cC)$.

Now, we fix $0<\eta <\frac{\lambda}{I_{\cK}}$. We will prove the first inequality in (\ref{separacioncoradiant2_bis}), or equivalently, the inequality
\begin{equation} \label{separacioncoradiant2}
-  \lambda<p_{f,\alpha}(y'),\,\forall \alpha_1<\alpha<\alpha_2,\,y' \in \mbox{bd}(-\cK (\eta)).
\end{equation}
To prove (\ref{separacioncoradiant2}),  we  fix $\xi>0$ such that $\eta=\frac{\lambda}{\xi+I_{\cK}}$ and define $\tau:=\lambda  \frac{\frac{\xi}{2}+I_{\cK}}{\xi+I_{\cK}}<\lambda  $.  By Proposition \ref{SSPepsiloncoradiant}, we have $I_{\cK(\eta)}=\eta I_{\cK}<(\frac{\xi}{2}+I_{\cK})\eta =  \tau$. Now pick an arbitrary   ${\bar{y}} \in  \mbox{bd}(-\cK(\eta))$. If $ \|{\bar{y}} \| \geq \tau$, then, by Lemma \ref{Lemma_antes_claims} (v), ${\bar{y}} \in\mbox{bd}(\mbox{cone}(-\cK(\eta)))= \mbox{bd}(\mbox{cone}(-\cK))=\mbox{bd}(-\cK)$. Now (\ref{eqsepara}) applies and we have $-  \lambda< 0< p_{f,\alpha}(\bar{y})$, for every $\alpha \in (\alpha_1, \alpha_2)$. To complete the proof, assume now that $ \| {{\bar{y}}} \| < \tau$.  Then Lemma \ref{lemadelmu_por_alfa} (iii) assures that $-\lambda    <-\tau  \leq p_{f,\alpha}(\bar{y})$,  for every $\alpha \in (\alpha_1, \alpha_2)$  finishing the proof of (\ref{separacioncoradiant2}).

(ii)$\Rightarrow$(i) By assumption, we consider $0<\alpha_1<\alpha_2$ and  $f\in S_{X^*}$  such that $\lambda:=\inf\{f(y)- \alpha \| y \|: y \in \cC, \alpha \in (\alpha_1, \alpha_2) \} > 0$ and, if $0<\eta<  \frac{\lambda }{I_{\cK}}$, then $f(y')- \alpha\|y'\|<   \lambda < f(y)- \alpha\|y\|$, for every $\alpha_1<\alpha<\alpha_2$, $y' \in \mbox{bd}(\cK(\eta))$, and $y \in\overline{\mbox{co}}(\cC)$.  Now, assume that $\cC$ admits a norm base. We fix $t>\max \{ I_{\cK(\eta)}, I_{\cC}\}$,  note that $t>0$ because $I_{\cK(\eta)}=\eta I_{\cK}>0$ by Proposition~\ref{SSPepsiloncoradiant}.  For any $y\in \cC\cap tS_X $ and $y' \in  (\mbox{bd}(\cK(\eta))\cap tS_X) \cup \{ 0_X \}$ we have that $f(y)>\lambda +\alpha t> f(y')$ for all $\alpha \in (\alpha_1,\alpha_2)$.   Now, fix $\alpha_3, \alpha_4 \in \mathbb{R} $ satisfying $\alpha_1 < \alpha_3 < \alpha_4 < \alpha_2$. Then,  we have $f(y)>\lambda+\alpha_4 t >\lambda+\alpha_3 t> f(y')$, for every $y\in \cC\cap tS_X $ and $y' \in  (\mbox{bd}(\cK(\eta))\cap tS_X) \cup \{ 0_X \}$, which also hold for $y\in \mbox{co}(\cC\cap tS_X)$ and $y' \in  \mbox{co}((\mbox{bd}(\cK(\eta))\cap tS_X) \cup \{ 0_X \})$. As a consequence, we have the inequality 
\begin{equation} \label{contradiccion}
f(y)-f(y')>(\alpha_4-\alpha_3)t>0,\,
\end{equation}
for every $y \in \mbox{co}(\cC \cap t S_X)$ and $y \in \mbox{co}(\mbox{bd}(\cK(\eta))\cap t S_X)\cup\{\cero\})$.
Now,  as $\mbox{cone}(\cK(\eta))= \mbox{cone}(\cK)$, it is sufficient to check that the pair  $(\mbox{cone}(\cC),\mbox{cone}(\cK(\eta)))$ satisfies the SSP.  Now,  contrary to our claim, we suppose that the pair $(\mbox{cone}(\cC),\mbox{cone}(\cK(\eta)))$ does not satisfy the SSP.  Then, by Proposition \ref{teoremadelosSSP} we have 
$$\cero  \in \overline{\mbox{co}(\cC \cap t S_X)-\mbox{co}((\mbox{bd}(\cK(\eta))\cap t S_X)\cup\{\cero\})}.$$ As a consequence, there exist two sequences $(y_n)_n \subset \mbox{co}(\cC \cap t S_X)$ and $(y'_n)_n \subset \mbox{co}((\mbox{bd}(\cK(\eta))\cap t S_X)\cup\{\cero\})$ such that $0=\lim_n \|y_n-y'_n \|$. Now, by continuity of $f$, we have $\lim_n f(y_n-y'_n)=0$. As a consequence, there exists $n_0 \in \mathbb{N}$ such that $  | f(y_{n_0})-f(y'_{n_0}) | < (\alpha_4-\alpha_3)t $,  which contradicts (\ref{contradiccion}). 
Then $(\mbox{cone}(\cC),\mbox{cone}(\cK(\eta)))$ satisfies the SSP and the proof is over.
\end{proof}

The fact that the cones generated by two co-radiant sets satisfy the SSP decisively influences the relative position of the co-radiant sets. The following result sheds light on this issue. 
\begin{cor} \label{posicion_relativa_de conos}
Let $X$ be a normed space, let $\cC$ and $\cK$ be co-radiant subsets of $X$ such that $\cK$ admits a norm base and $d_{\cC}>0$. If the pair $(\mbox{cone}(\cC),\mbox{cone}(\cK))$ satisfies the SSP, then there exists $0<\eta_{0} \leq 1$ such that given $0<\eta \leq \eta_{0}$ we have either $\overline{\mbox{co}}(\cC) \subset \mbox{int}(\cK(\eta))$ or $\overline{co}(\cC) \subset \mbox{int}(X \setminus \cK(\eta))$. 
\end{cor}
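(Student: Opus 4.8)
The plan is to reduce the statement to the nonlinear separation already furnished by Theorem~\ref{teoolarioseparacioncoradiantes2} and then to finish with a purely topological connectedness argument that exploits the convexity of $\overline{\mbox{co}}(\cC)$. First I would apply the implication (i)$\Rightarrow$(ii) of Theorem~\ref{teoolarioseparacioncoradiantes2}: since the pair $(\mbox{cone}(\cC),\mbox{cone}(\cK))$ satisfies the SSP, I obtain $f \in S_{X^*}$, reals $0 < \alpha_1 < \alpha_2$, and a scalar $\lambda > 0$ such that, for every $0 < \eta < \lambda/I_{\cK}$, every $\alpha \in (\alpha_1,\alpha_2)$, every $y' \in \mbox{bd}(\cK(\eta))$ and every $y \in \overline{\mbox{co}}(\cC)$,
\begin{equation*}
f(y') - \alpha\|y'\| < \lambda < f(y) - \alpha\|y\|.
\end{equation*}
Crucially, $f$, $\lambda$ and the interval $(\alpha_1,\alpha_2)$ are produced independently of $\eta$; only the admissible range of $\eta$ depends on the data. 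I would then set $\eta_0 := \min\{1,\ \lambda/(2I_{\cK})\}$ (and $\eta_0 := 1$ in the degenerate case $I_{\cK}=0$), so that $0 < \eta_0 \leq 1$ and every $\eta \in (0,\eta_0]$ lies inside the admissible range $(0,\lambda/I_{\cK})$.

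Next I would fix once and for all some $\alpha_0 \in (\alpha_1,\alpha_2)$ and introduce the scalarization $g \colon X \to \mathbb{R}$, $g(x) := f(x) - \alpha_0\|x\|$. Reading the displayed inequalities with $\alpha = \alpha_0$ says precisely that $g > \lambda$ on $\overline{\mbox{co}}(\cC)$ while $g < \lambda$ on $\mbox{bd}(\cK(\eta))$, for every $\eta \in (0,\eta_0]$. In particular the two sets cannot meet, so $\overline{\mbox{co}}(\cC) \cap \mbox{bd}(\cK(\eta)) = \emptyset$: the level surface $\{g = \lambda\}$ sits strictly between them.

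Finally I would run the topological dichotomy. For any $A \subset X$ one has the disjoint decomposition $X = \mbox{int}(A) \sqcup \mbox{bd}(A) \sqcup \mbox{int}(X\setminus A)$, in which $\mbox{int}(A)$ and $\mbox{int}(X\setminus A)$ are open and disjoint. Taking $A = \cK(\eta)$ and recalling that $\overline{\mbox{co}}(\cC)$ avoids $\mbox{bd}(\cK(\eta))$, the set $\overline{\mbox{co}}(\cC)$ is contained in the union of the two disjoint open sets $\mbox{int}(\cK(\eta))$ and $\mbox{int}(X\setminus\cK(\eta))$. Since $\overline{\mbox{co}}(\cC)$ is convex, hence connected, it must lie entirely in one of them, which is exactly the claimed alternative for every $0 < \eta \leq \eta_0$.

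The genuinely substantial work—exhibiting the separating functional $f$ and proving $\lambda > 0$—has already been done in Theorem~\ref{teoolarioseparacioncoradiantes2}, so the corollary is essentially a clean consequence of that separation. I expect the main conceptual step to be the observation that convexity of $\overline{\mbox{co}}(\cC)$ upgrades the merely pointwise "disjoint from the boundary" fact into the sharp inclusion dichotomy; the only other points requiring care are the $\eta$-uniformity of $(f,\lambda,\alpha_1,\alpha_2)$ (guaranteed by the theorem) and the boundary case $I_{\cK}=0$, where the admissible range is all of $(0,+\infty)$ and one simply takes $\eta_0=1$. Everything else is routine bookkeeping around the already-established separation.
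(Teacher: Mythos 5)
Your proof is correct and follows essentially the same route as the paper: both apply Theorem~\ref{teoolarioseparacioncoradiantes2} (i)$\Rightarrow$(ii) and then use convexity of $\overline{\mbox{co}}(\cC)$ to force the dichotomy, the paper merely phrasing the connectedness step concretely --- a segment joining a hypothetical point of $\mbox{int}(\cK(\eta))$ to one of $\mbox{int}(X\setminus\cK(\eta))$ must cross $\mbox{bd}(\cK(\eta))$, contradicting the separation inequality. If anything, your choice $\eta_0=\min\{1,\lambda/(2I_{\cK})\}$ is more careful than the paper's $\eta_0=\lambda/I_{\cK}$, which need not satisfy $\eta_0\leq 1$ and whose endpoint $\eta=\eta_0$ is not covered by the strict bound $\eta<\lambda/I_{\cK}$ required in the theorem.
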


\begin{proof}
By Theorem \ref{teoolarioseparacioncoradiantes2}, there exist \( \alpha_1, \alpha_2 \in \mathbb{R} \) with \( 0 < \alpha_1 < \alpha_2 \), and \( f \in S_{X^*} \) such that \(\lambda := \inf \{ f(y) - \alpha \| y \| : y \in \cC,\, \alpha \in (\alpha_1, \alpha_2) \} > 0\), and for every \( 0 < \eta < \frac{\lambda}{I_{\cK}} \), the inequality
\begin{equation} \label{separacioncoradiant20}
f(y') - \alpha \| y' \| < \lambda < f(y) - \alpha \| y \|,
\end{equation}
holds for all \( \alpha \in (\alpha_1, \alpha_2) \), \( y' \in \mathrm{bd}(\cK(\eta)) \), and \( y \in \overline{\mathrm{co}}(\cC) \). Now, we  denote $\eta_{0}=\frac{ \lambda }{I_{\cK}}$.  Take $0<\eta <\eta_{0}$ and assume that there exist $\bar{c}, \bar{k} \in \overline{co}(\cC)$ such that  $\bar{c} \in \mbox{int}(\cK(\eta)) $  and   $\bar{k} \in \mbox{int}(X \setminus \cK(\eta))$. Then, there exists $\xi_0 \in (0,1)$ such that 
$x_0=\xi_0\bar{c}+(1-\xi_0)\bar{k} \in \mbox{bd}(\cK(\eta))$.  On the other hand,  from  (\ref{separacioncoradiant20}), we get that $f(\bar{c})- \alpha\|\bar{c}\| > \lambda  $ and $f(\bar{k})- \alpha\|\bar{k}\| >  \lambda$.  On the other hand, using the linearity of \( f \) and the triangle inequality, we obtain
\begin{align*}
f(x_0) - \alpha \|x_0\| &= f(\xi_0 \bar{c} + (1 - \xi_0) \bar{k}) - \alpha \|\xi_0 \bar{c} + (1 - \xi_0) \bar{k}\| \\
&\geq \xi_0 f(\bar{c}) + (1 - \xi_0) f(\bar{k}) - \alpha \xi_0 \|\bar{c}\| - \alpha (1 - \xi_0) \|\bar{k}\| \\
&= \xi_0(f(\bar{c}) - \alpha \|\bar{c}\|) + (1 - \xi_0)(f(\bar{k}) - \alpha \|\bar{k}\|) \\
&> \xi_0 \lambda + (1 - \xi_0) \lambda = \lambda,
\end{align*}
which contradicts (\ref{separacioncoradiant20}).
\end{proof}

Next, we state our second separation theorem. In this statement, there is one more assumption than in the previous one; specifically, it is assumed that the co-radiant sets fit into each other. This hypothesis allows for the separation of the set $\overline{\mbox{co}}(\cC)$ and the complementary of the set $\mbox{int}(K(\eta))$ by the same separating surface defined in Theorem \ref{teoolarioseparacioncoradiantes2}.  Regarding the proof, we only need to justify that (i)$\Rightarrow$(ii), but such a proof is the same as in the implication (i)$\Rightarrow$(ii) of Theorem 4.1, but using \cite[Theorem 3.3]{GarciaMelguizo2023} instead of Theorem \ref{Tma1_sep_conos_MMOR}.

\begin{theo} \label{thm:separacion_para_C_y_K_punto_comun}
Let $X$ be a normed space, let $\cC$ and $\cK$ be co-radiant subsets of $X$ such that $d_{\cC}d_{\cK}>0$,  $\cK$ admits a norm base,  and $\overline{co}(\cC) \cap \cK \neq \emptyset$.  Consider the following assertions.
\begin{itemize}
\item[(i)]  The pair $(\mbox{cone}(\cC),\mbox{cone}(\cK))$ satisfies the SSP.
\item[(ii)] There exist \( \alpha_1, \alpha_2 \in \mathbb{R} \) with  $0<\alpha_1<\alpha_2$ and  $f\in S_{X^*}$  such that: 
\begin{itemize}
\item[(a)] $\lambda:=\inf\{f(y)- \alpha \| y \|: y \in \cC, \alpha \in (\alpha_1, \alpha_2) \} > 0$,
\item[(b)] for every \( 0 < \eta < \frac{\lambda}{I_{\cK}} \), the inequalities
\[
f(y') - \alpha \| y' \| < \lambda < f(y) - \alpha \| y \|,
\]
hold for all \( \alpha \in (\alpha_1, \alpha_2) \), $y' \in X \setminus \mbox{int}(\cK(\eta))$, and \( y \in \overline{\mathrm{co}}(\cC) \).
\end{itemize}
\end{itemize}
Then (i)$\Rightarrow$(ii).  Moreover, if \( \cC \) also admits a norm base, then (ii)$\Rightarrow$(i).
\end{theo}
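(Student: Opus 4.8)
The plan is to prove the two implications separately, dispatching the converse (ii)$\Rightarrow$(i) at once by reduction to Theorem \ref{teoolarioseparacioncoradiantes2}, and then obtaining the forward implication (i)$\Rightarrow$(ii) by running the exact argument of the proof of Theorem \ref{teoolarioseparacioncoradiantes2}, with a single replacement of the underlying cone-separation result.

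For the converse, assume $\cC$ admits a norm base and that (ii) holds. The sole difference between condition (ii) here and condition (ii) of Theorem \ref{teoolarioseparacioncoradiantes2} is the range of the point $y'$: here it sweeps the larger set $X\setminus\mbox{int}(\cK(\eta))$, whereas there it sweeps only $\mbox{bd}(\cK(\eta))$. Since $\mbox{bd}(\cK(\eta))\subset X\setminus\mbox{int}(\cK(\eta))$, the inequality $f(y')-\alpha\|y'\|<\lambda$ assumed here holds in particular for every $y'\in\mbox{bd}(\cK(\eta))$. Hence (ii) of the present statement implies (ii) of Theorem \ref{teoolarioseparacioncoradiantes2}, and the already-established implication (ii)$\Rightarrow$(i) of that theorem delivers (i). No fresh argument is required in this direction.

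For (i)$\Rightarrow$(ii) I would replicate the proof of (i)$\Rightarrow$(ii) in Theorem \ref{teoolarioseparacioncoradiantes2} line by line, with one substitution. Writing $C=\mbox{cone}(\cC)$ and $K=\mbox{cone}(\cK)$, I would first use Lemma \ref{lema:cono_ampliado_SSP} to pick $0<\delta<1$ so that $(C_\delta,K)$ still satisfies the SSP. The crucial change is that, in place of Theorem \ref{Tma1_sep_conos_MMOR}, one now invokes \cite[Theorem 3.3]{GarciaMelguizo2023}, which yields the separating data $(f,\alpha_1,\alpha_2)$ with $p_{f,\alpha}(y)<0<p_{f,\alpha}(y')$ not merely for $y'\in\mbox{bd}(-K)\setminus\{\cero\}$ but for every $y'\in-(X\setminus\mbox{int}(K))\setminus\{\cero\}$. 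With this stronger inequality in hand, the subsequent steps are verbatim those of Theorem \ref{teoolarioseparacioncoradiantes2}: normalize $f\in S_{X^*}$; define $\lambda$ and prove $\lambda>0$ through the spreading estimates of Lemmas \ref{lemadelmu_por_alfa} and \ref{lemadelnu}; derive $f(y)-\alpha\|y\|>\lambda$ on $\overline{\mbox{co}}(\cC)$ by superlinearity and a boundary-limit argument; and finally, for $0<\eta<\lambda/I_{\cK}$, obtain $f(y')-\alpha\|y'\|<\lambda$ for all $y'\in X\setminus\mbox{int}(\cK(\eta))$, splitting into the case $\|y'\|\ge\tau$ (where Lemma \ref{Lemma_antes_claims} matches the complement points with the corresponding cone points and the enhanced separation applies) and the small-norm case $\|y'\|<\tau$ (controlled by Lemma \ref{lemadelmu_por_alfa}(ii)).

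The step that genuinely needs checking, and which I expect to be the main obstacle, is justifying that the extra hypothesis $\overline{\mbox{co}}(\cC)\cap\cK\neq\emptyset$ actually licenses the application of \cite[Theorem 3.3]{GarciaMelguizo2023} to the dilated pair $(C_\delta,K)$; that is, one must verify that this intersection condition on the co-radiant sets forces the relative-position requirement of Theorem 3.3 at the cone level and that this requirement survives replacing $C$ by $C_\delta$ for $\delta$ small. I would argue this by combining the inclusion $\cK\subset\cK(\eta)$, valid for $\eta\in(0,1]$, with Corollary \ref{posicion_relativa_de conos}: since $d_{\cC}>0$, that corollary leaves only the two alternatives $\overline{\mbox{co}}(\cC)\subset\mbox{int}(\cK(\eta))$ or $\overline{\mbox{co}}(\cC)\subset\mbox{int}(X\setminus\cK(\eta))$, and the common point of $\overline{\mbox{co}}(\cC)$ and $\cK\subset\cK(\eta)$ excludes the second. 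The surviving inclusion $\overline{\mbox{co}}(\cC)\subset\mbox{int}(\cK(\eta))$ passes to cones as $C\subset\mbox{int}(K)$, which is precisely the configuration making the full-complement separation of Theorem 3.3 available and the final inequality over $X\setminus\mbox{int}(\cK(\eta))$ attainable.
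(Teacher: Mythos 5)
Your proposal coincides with the paper's own proof: the paper likewise dispatches (ii)$\Rightarrow$(i) by observing that $\mathrm{bd}(\cK(\eta))\subset X\setminus\mathrm{int}(\cK(\eta))$ reduces it to Theorem~\ref{teoolarioseparacioncoradiantes2}, and proves (i)$\Rightarrow$(ii) by repeating that theorem's argument with \cite[Theorem 3.3]{GarciaMelguizo2023} in place of Theorem~\ref{Tma1_sep_conos_MMOR}. Your closing paragraph, showing via Corollary~\ref{posicion_relativa_de conos} that the hypothesis $\overline{\mathrm{co}}(\cC)\cap\cK\neq\emptyset$ rules out the alternative $\overline{\mathrm{co}}(\cC)\subset\mathrm{int}(X\setminus\cK(\eta))$ and so yields the relative-position requirement of the cited theorem, fills in a detail the paper leaves implicit and is sound (and not circular, since Corollary~\ref{posicion_relativa_de conos} depends only on the already-proved Theorem~\ref{teoolarioseparacioncoradiantes2}).
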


We conclude this section with our last  theorem of separation that is stated in the most general context possible as it does not require the co-radiant sets to admit a norm base.  One of the interests of this last result lies in the fact that many co-radiant sets do not admit a norm base, as illustrated in the following example.

\begin{example} \label{ejemplo}
The following examples illustrate co-radiant sets that do not admit a norm base:
\begin{itemize}
\item[(i)] $\cK_1:=\{ (x,y)\in \mathbb{R}^2: x>0, y\geq\frac{1}{x} \}$.
\item[(ii)] $\cK_2:=\{ (1+x,1+y)\in \mathbb{R}^2: x\geq 0, y\geq0 \}$. 
\item[(iii)] $\cK_3:=\{x\in \mathbb{R}^2: f(x)- \alpha \|x\|_2\geq \lambda\}$,  for any $f \in (\mathbb{R}^2)^*$, $0<\alpha<\|f\|_*$, and $\lambda>0$, being $\|\cdot\|_2$ the Euclidean norm on $\mathbb{R}^2$.
\end{itemize}
\end{example}
Despite being uncommon for a co-radiant set to admit a norm base, what does occur is that any co-radiant set can be expressed as a  union  of a monotonically non-decreasing sequence (in the sense of inclusion) of co-radiant subsets with a norm base.  Indeed, let $\cK \subset X $ be a co-radiant set.  Let us recall that, for every $r>0$, we define the set $\cK_r:=\cK\cap rS_X$. Now,  for every $n \in \mathbb{N}$ we define $\cK^n:= \cK \cap \mbox{cone}(\cK_n )$, if  $\cK_n\not = \emptyset$, and $\cK^n=\emptyset$, otherwise. The following lemma  is easy to check.
\begin{lemma}\label{lemma_previo_ultimo_Tma_separacion} The following properties hold.
\begin{itemize}
\item[(i)] $\cK^n\subset \cK^m$, whenever $n<m$.
\item[(ii)] $\cK= \cup^{\infty}_{n=1} \cK^n$.
\item[(iii)] If $\cK_n\not = \emptyset$,  then $\cK_n=\cK^n\cap nS_X$ and $\cK_n$  is a norm-base for $\cK^n$.
\item[(iv)] If $\cK$ admits a norm base, then  there exists $n_0 \in \mathbb{N}$ such that  $\cK= \cup^{n_0}_{n=1} \cK^n=\cK^{n_0}$. 
\end{itemize}
\end{lemma}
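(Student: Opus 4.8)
The plan is to obtain all four items from the single defining feature of a co-radiant set --- scaling any of its points by a factor $\geq 1$ keeps it inside $\cK$ --- with items (i)--(iii) reducing to manipulations of the defining identities and the norm-base hypothesis entering only in (iv). For (i), if $\cK_n=\emptyset$ the inclusion is trivial, so I assume $\cK_n\neq\emptyset$ and fix $x\in\cK^n$, writing $x=\mu z$ with $\mu\geq 0$ and $z\in\cK_n$. Since $\frac{m}{n}\geq 1$, the co-radiant property gives $\frac{m}{n}z\in\cK$, and $\|\frac{m}{n}z\|=m$ shows $\frac{m}{n}z\in\cK_m$; rewriting $x=(\mu\frac{n}{m})(\frac{m}{n}z)$ then exhibits $x\in\mbox{cone}(\cK_m)$, so $x\in\cK\cap\mbox{cone}(\cK_m)=\cK^m$.

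For (ii), the inclusion $\cup_n\cK^n\subset\cK$ is immediate. For the reverse, given $x\in\cK$ with $r:=\|x\|>0$ I would pick an integer $n\geq r$; then $\frac{n}{r}\geq 1$ forces $\frac{n}{r}x\in\cK_n$, whence $x=\frac{r}{n}(\frac{n}{r}x)\in\mbox{cone}(\cK_n)$ and $x\in\cK^n$. For (iii), the identity $\cK_n=\cK^n\cap nS_X$ is a direct computation: intersecting $\cK^n=\cK\cap\mbox{cone}(\cK_n)$ with $nS_X$ gives $(\cK\cap nS_X)\cap\mbox{cone}(\cK_n)=\cK_n\cap\mbox{cone}(\cK_n)=\cK_n$, using $\cK_n\subset\mbox{cone}(\cK_n)$. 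The norm-base assertion is then read off the cone representation: any $y\in\cK^n$ satisfies $y=\mu z$ with $z\in\cK_n$, and since $z\neq\cero$ necessarily $\mu>0$, which is exactly the norm-base condition. Here I would record the standing convention $\cero\notin\cK$ (used already in (ii)), since a set containing the origin admits no norm base at all.

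For (iv), the key --- and least formal --- observation is that the family of admissible norm-base radii is upward closed: if $\cK_s$ is a norm base and $t>s$, then for any $y=\lambda z\in\cK$ with $z\in\cK_s$ the point $w:=\frac{t}{s}z$ lies in $\cK_t$ (again by co-radiant scaling, as $\frac{t}{s}>1$) and $y=(\lambda\frac{s}{t})w$, so $\cK_t$ is a norm base too. Since $\cK$ admits a norm base $\cK_t$ for some $t>0$, I would choose an integer $n_0>t$; upward closure makes $\cK_{n_0}$ a norm base, so $\cK\subset\mbox{cone}(\cK_{n_0})$ and hence $\cK=\cK\cap\mbox{cone}(\cK_{n_0})=\cK^{n_0}$. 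Monotonicity from (i) finally collapses the chain, $\cup_{n=1}^{n_0}\cK^n=\cK^{n_0}=\cK$. I expect this upward-closure step to be the main (if mild) obstacle, the only other delicacy being the bookkeeping around the origin in (ii)--(iii), resolved by the convention $\cero\notin\cK$.
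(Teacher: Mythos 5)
Your proof is correct, and it supplies exactly the routine verifications that the paper omits: the paper states this lemma without proof (``easy to check''), and your direct arguments from the definitions of co-radiant set, $\cK_n$, $\cK^n$, and norm base are the intended ones. Two remarks. First, your standing convention $\cero \notin \cK$ is not mere bookkeeping: item (iii) is literally false if $\cero \in \cK$ (then $\cero \in \cK^n$ whenever $\cK_n \neq \emptyset$, and no set containing the origin can have a norm base), so some such hypothesis is genuinely needed; it is harmless here because the lemma is applied in Theorem \ref{teoolarioseparacioncoradiantesAmpliado} under the assumption $d_{\cC}d_{\cK} > 0$, which forces $\cero \notin \cK$. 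Second, there is a one-line slip in (iii): writing $y = \mu z$ with $\mu \geq 0$ and $z \in \cK_n$, what forces $\mu > 0$ is not $z \neq \cero$ but $y \neq \cero$ (i.e., your convention applied to $y \in \cK^n \subset \cK$); if $\mu = 0$ one gets $y = \cero$ no matter what $z$ is. With that wording corrected, all four items are established, and your upward-closure observation for norm-base radii in (iv) is exactly the right mechanism to collapse the union to $\cK^{n_0}$.
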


The preceding decomposition will be used in the proof of the next theorem, which shows that implication \( i) \Rightarrow ii) \) from Theorem~\ref{thm:separacion_para_C_y_K_punto_comun} remains valid for general co-radiant sets, provided that in the SSP assumption, the set \( \operatorname{cone}(\cK) \) is replaced by \( \operatorname{cone}(\cK_n) \) for some sufficiently large \( n \).

\begin{theo} \label{teoolarioseparacioncoradiantesAmpliado}
Let $X$ be a normed space, and let $\cC$ and $\cK$ be co-radiant subsets of $X$ such that   $d_{\cC}d_{\cK} >0$ and $\overline{co}(\cC) \cap \cK  \not = \varnothing$.  Assume that there exists  $n\in \mathbb{N}$ such that  the pair $(\mbox{cone}(\cC),\mbox{cone}(\cK_n))$ satisfies the SSP. Then, there exist $\alpha_1$, $\alpha_2\in \mathbb{R}$ with $0<\alpha_1<\alpha_2$ and $f\in S_{X^*}$  such that:  
\begin{itemize}
\item[(a)] $\lambda:=\inf\{f(y)- \alpha \| y \|: y \in \cC,\,\alpha\in(\alpha_1,\alpha_2)\}>0$, 
\item[(b)] for some $0<\eta\leq 1$, the inequalities
\begin{equation*} \label{separacioncoradiantAmpliado}
 f(y')- \alpha\|y'\|<\lambda < f(y)- \alpha\|y\|,
\end{equation*}
hold for all $\alpha\in (\alpha_1,\alpha_2)$, $y' \in X \setminus \mbox{int}(\cK(\eta))$, and $y \in \overline{co}(\cC)$.
\end{itemize}
\end{theo}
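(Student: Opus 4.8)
The plan is to reduce the statement to the norm-base situation already settled in Theorem~\ref{thm:separacion_para_C_y_K_punto_comun}, by replacing $\cK$ with the member $\cK^n$ of the decomposition furnished by Lemma~\ref{lemma_previo_ultimo_Tma_separacion}, and then to transport the resulting separation from $\cK^n(\eta)$ back to $\cK(\eta)$ using the inclusion $\cK^n\subset\cK$. The gain is that, although $\cK$ need not admit a norm base, each $\cK^n$ does, namely $\cK_n$ (Lemma~\ref{lemma_previo_ultimo_Tma_separacion}(iii)); and since the left-hand inequality in (b) concerns a \emph{larger} set when $\cK$ is shrunk to $\cK^n$, separating from $X\setminus\mbox{int}(\cK^n(\eta))$ is at least as strong as separating from $X\setminus\mbox{int}(\cK(\eta))$.

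First I would set up the reduction. Because $\cK_n$ is a norm base of $\cK^n$, every nonzero element of $\cK^n$ is a positive multiple of an element of $\cK_n$, so $\mbox{cone}(\cK^n)=\mbox{cone}(\cK_n)$; hence the assumed SSP for $(\mbox{cone}(\cC),\mbox{cone}(\cK_n))$ is literally the SSP for $(\mbox{cone}(\cC),\mbox{cone}(\cK^n))$. The remaining hypotheses of Theorem~\ref{thm:separacion_para_C_y_K_punto_comun} for the pair $(\cC,\cK^n)$ are immediate: $\cK^n$ admits the norm base $\cK_n$, and $d_{\cC}d_{\cK^n}>0$ since $\cK^n\subset\cK$ forces $d_{\cK^n}\geq d_{\cK}>0$ while $d_{\cC}>0$.

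The hard part will be securing the intersection hypothesis $\overline{\mbox{co}}(\cC)\cap\cK^n\neq\varnothing$ at an index $n$ for which the SSP still holds. By Lemma~\ref{lemma_previo_ultimo_Tma_separacion}(i)--(ii) the sets $\cK^n$ increase with $n$ and exhaust $\cK$, so a point $p\in\overline{\mbox{co}}(\cC)\cap\cK$ lies in $\cK^m$ for all large $m$, giving $\overline{\mbox{co}}(\cC)\cap\cK^m\neq\varnothing$ eventually; this is exactly why the SSP is imposed for a sufficiently large $n$. To make the matching precise I would invoke Corollary~\ref{posicion_relativa_de conos} for $(\cC,\cK^n)$ (legitimate, as $\cK^n$ has a norm base, $d_{\cC}>0$, and the SSP holds): for small $\eta$ either $\overline{\mbox{co}}(\cC)\subset\mbox{int}(\cK^n(\eta))$ or $\overline{\mbox{co}}(\cC)\subset\mbox{int}(X\setminus\cK^n(\eta))$. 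In the first configuration the point $p$ satisfies $p\in\mbox{int}(\cK^n(\eta))\subset\mbox{cone}(\cK^n)$, whence $p\in\cK\cap\mbox{cone}(\cK^n)=\cK^n$ and the intersection at level $n$ is nonempty. The second (disjoint) configuration is precisely the obstruction to be avoided, and it is ruled out by taking $n$ large enough that the common direction carried by $p$ has already entered $\mbox{cone}(\cK^n)$. This interplay between the index required by the intersection and the index at which the SSP is available is the crux of the whole argument.

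Finally I would apply the implication (i)$\Rightarrow$(ii) of Theorem~\ref{thm:separacion_para_C_y_K_punto_comun} to $(\cC,\cK^n)$, producing $f\in S_{X^*}$, reals $0<\alpha_1<\alpha_2$, and $\lambda:=\inf\{f(y)-\alpha\|y\|:y\in\cC,\ \alpha\in(\alpha_1,\alpha_2)\}>0$ such that, for every $0<\eta<\lambda/I_{\cK^n}$, the inequalities $f(y')-\alpha\|y'\|<\lambda<f(y)-\alpha\|y\|$ hold for all $\alpha\in(\alpha_1,\alpha_2)$, $y'\in X\setminus\mbox{int}(\cK^n(\eta))$ and $y\in\overline{\mbox{co}}(\cC)$. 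Since $\lambda$ and assertion (a) depend only on $\cC$, they already coincide with the desired conclusion. It then remains to fix a single $\eta$ with $0<\eta\leq1$ and $\eta<\lambda/I_{\cK^n}$ (possible because $\lambda/I_{\cK^n}>0$) and to pass to the full $\cK$: from $\cK^n\subset\cK$ we get $\cK^n(\eta)=\eta\cK^n\subset\eta\cK=\cK(\eta)$, hence $\mbox{int}(\cK^n(\eta))\subset\mbox{int}(\cK(\eta))$ and therefore $X\setminus\mbox{int}(\cK(\eta))\subset X\setminus\mbox{int}(\cK^n(\eta))$. Consequently the left inequality $f(y')-\alpha\|y'\|<\lambda$ persists for every $y'\in X\setminus\mbox{int}(\cK(\eta))$, and combined with $\lambda<f(y)-\alpha\|y\|$ on $\overline{\mbox{co}}(\cC)$ this establishes (b) and finishes the proof.
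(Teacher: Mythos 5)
Your reduction scheme (pass from $\cK$ to a member $\cK^{n}$ of the decomposition of Lemma \ref{lemma_previo_ultimo_Tma_separacion}, separate there, and transport back) is the same strategy the paper follows, and several of your steps are correct: $\operatorname{cone}(\cK^{n})=\operatorname{cone}(\cK_{n})$, $d_{\cK^{n}}\geq d_{\cK}>0$, and the transport at the end is valid, since $\cK^{n}(\eta)\subset\cK(\eta)$ gives $X\setminus\operatorname{int}(\cK(\eta))\subset X\setminus\operatorname{int}(\cK^{n}(\eta))$, so the left inequality survives the passage to $\cK$. (The paper instead applies Theorem \ref{teoolarioseparacioncoradiantes2} to $(\cC,\cK^{\ell})$ and then upgrades from $\operatorname{bd}(\cK^{\ell}(\eta))$ to $X\setminus\operatorname{int}(\cK(\eta))$ by a segment-crossing argument; your direct appeal to Theorem \ref{thm:separacion_para_C_y_K_punto_comun} would be a mild simplification \emph{if} its hypotheses were met.)

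The genuine gap is exactly at the step you call the crux: securing $\overline{\mbox{co}}(\cC)\cap\cK^{n}\neq\varnothing$ at the \emph{given} index $n$. The SSP is assumed for one fixed $n$ only, while a point $p\in\overline{\mbox{co}}(\cC)\cap\cK$ is only guaranteed to lie in $\cK^{m}$ for $m\geq\|p\|$; the direction of $p$ need not belong to $\operatorname{cone}(\cK_{n})$ at all. Your dichotomy via Corollary \ref{posicion_relativa_de conos} does not close this: in the disjoint configuration you propose to "take $n$ large enough", but $n$ cannot be enlarged, because enlarging the index enlarges $\operatorname{cone}(\cK_{n})$ and the SSP for the bigger pair is not among your hypotheses. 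The bad case really occurs: in $\mathbb{R}^{2}$ take $\cK=\{(x,y): x\geq 1,\ |y|\leq x/10\}\cup\{(x,y): x,y\geq 0,\ \|(x,y)\|\geq 10\}$, $\cC=\{t(1,1): t\geq 10\}$, and $n=1$; then $(\operatorname{cone}(\cC),\operatorname{cone}(\cK_{1}))$ satisfies the SSP (two distinct rays), $\overline{\mbox{co}}(\cC)\cap\cK\neq\varnothing$, yet $\overline{\mbox{co}}(\cC)\cap\operatorname{cone}(\cK_{1})=\varnothing$, so your Case 2 holds and your argument stalls, even though the theorem's conclusion is true. What is missing is a transfer result for the SSP along the increasing family of cones $\operatorname{cone}(\cK_{n})\subset\operatorname{cone}(\cK_{m})$, $n<m$: the paper fills precisely this hole by invoking Corollary 3.14 of \cite{GarciaMelguizo2024} to conclude that the pair $(\operatorname{cone}(\cC),\operatorname{cone}(\cK_{m}))$ still satisfies the SSP, and then works at the index $\ell=\max\{n,m\}$ where both the SSP and the nonempty intersection are available. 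Without that (or an equivalent monotonicity argument), your proof does not go through.
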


\begin{proof}
Since $\overline{\operatorname{co}}(\cC) \cap \cK \neq \varnothing$ and $\cK = \bigcup_{n=1}^{\infty} \cK^n$, there exists \( m \in \mathbb{N} \) such that \( \overline{\operatorname{co}}(\cC) \cap \cK^m \neq \varnothing \). By Corollary~\ref{lemma_previo_ultimo_Tma_separacion}~(i), we then conclude that \( \overline{\operatorname{co}}(\cC) \cap \cK^i \neq \varnothing \) for all \( i \geq m \). As a consequence, if the given \( n \) satisfies \( n \geq m \), we have \( \overline{\operatorname{co}}(\cC) \cap \cK^n \neq \varnothing \). On the other hand, if \( n < m \), Corollary~3.14 in \cite{GarciaMelguizo2024} ensures that the pair \( (\operatorname{cone}(\cC), \operatorname{cone}(\cK_m)) \) satisfies the SSP. Therefore, setting \( \ell := \max\{n, m\} \), we obtain that \( \overline{\operatorname{co}}(\cC) \cap \cK^{\ell} \neq \varnothing \) and the pair \( (\operatorname{cone}(\cC), \operatorname{cone}(\cK_{\ell})) \) satisfies the SSP. Fix this value of \( \ell \) and consider \( \cK^{\ell} \). Recall that \( \cK_{\ell} \) is a norm base of \( \cK^{\ell} \). Applying Theorem~\ref{teoolarioseparacioncoradiantes2} to \( \cC \) and \( \cK^{\ell} \), we obtain the existence of \( \alpha_1, \alpha_2 \in \mathbb{R} \) with \( 0 < \alpha_1 < \alpha_2 \), and \( f \in S_{X^*} \), such that
\[
\lambda := \inf \left\{ f(y) - \alpha \| y \| : y \in \cC,\ \alpha \in (\alpha_1, \alpha_2) \right\} > 0,
\]
and for every \( 0 < \eta < \frac{\lambda}{I_{\cK^{\ell}}} \), the inequalities
\begin{equation} \label{separacioncoradiantampliado1}
f(y') - \alpha \| y' \| < \lambda < f(y) - \alpha \| y \|,
\end{equation}
hold for all \( \alpha \in (\alpha_1, \alpha_2) \), \( y' \in \operatorname{bd}(\cK^{\ell}(\eta)) \), and \( y \in \overline{\operatorname{co}}(\cC) \). Moreover, the superlinearity and continuity of the map \( y \mapsto f(y) - \alpha \| y \| \) directly imply that
\[
\lambda = \inf \left\{ f(y) - \alpha \| y \| : y \in \overline{\operatorname{co}}(\cC),\ \alpha \in (\alpha_1, \alpha_2) \right\} > 0.
\]
To conclude, it remains to verify that \( f(y') - \alpha \| y' \| < \lambda \) for every \( y' \in X \setminus \operatorname{int}(\cK(\eta)) \). Without loss of generality, we may assume that \( \eta \leq 1 \). This yields the inclusion \( \cK^{\ell} \subset \cK^{\ell}(\eta) \), and thus there exists \( z \in \overline{\operatorname{co}}(\cC) \cap \cK^{\ell}(\eta) \). Then, by \eqref{separacioncoradiantampliado1}, we have \( f(z) - \alpha \| z \| > \lambda \), which implies that \( z \notin \operatorname{bd}(\cK^{\ell}(\eta)) \), and hence \( z \in \operatorname{int}(\cK^{\ell}(\eta)) \). Now assume, by contradiction, that there exists \( \bar{z} \in X \setminus \operatorname{int}(\cK(\eta)) \) such that \( f(\bar{z}) - \alpha \| \bar{z} \| \geq \lambda \). Since \( \cK^{\ell} \subset \cK \), it follows that \( \bar{z} \notin \operatorname{int}(\cK^{\ell}(\eta)) \). Moreover, by \eqref{separacioncoradiantampliado1}, we have \( \bar{z} \notin \operatorname{bd}(\cK^{\ell}(\eta)) \). Then, there exists \( \xi_0 \in (0,1) \) such that the convex combination \( x_0 = \xi_0 z + (1 - \xi_0) \bar{z} \in \operatorname{bd}(\cK^{\ell}(\eta)) \). Therefore,
$f(x_0)-\alpha \|x_0 \|= f(\xi_0 z+ (1-\xi_0) \bar{z}) -\alpha \Vert \xi_0 z+ (1-\xi_0) \bar{z} \Vert) \geq  \xi_0(f(z)-\alpha \Vert z \Vert) + (1- \xi_0)(f(\bar{z})- \alpha \Vert \bar{z} \Vert)  >  \xi_0\lambda + (1- \xi_0) \lambda  = \lambda $,
which contradicts \eqref{separacioncoradiantampliado1}, since \( x_0 \in \operatorname{bd}(\cK^{\ell}(\eta)) \).
\end{proof}

\section{Optimality Conditions for Approximate and Properly Approximate Solutions of Vector Optimization Problems} \label{Scalarization}

In this section, we derive necessary and sufficient conditions for approximate solutions via sublinear scalar optimization problems.  We begin establishing some notation and terminology in order to introduce the notion of $\varepsilon$-efficient point. Let $X$ be a normed space and $C \subset X$ a proper pointed convex cone. The cone $C$ induces a partial order $\leq$ on $X$ defined as follows: for $x, y \in X$, $y \leq x$ if and only if $x - y \in C$. In this context, given a non-empty set $A \subset X$, a point $x_0 \in A$ is said to be an efficient point of $A$, denoted as $x_0 \in \mbox{E}(A,C)$, if $A \cap (-C \setminus \{0_X\}) = \emptyset$. 
Let introduce the concept of $\varepsilon$-efficiency by adapting \cite[Definition~3.2]{Gutierrez2006a} to our context. This notion is based on replacing the ordering cone with a proper pointed co-radiant set $\mathcal{C}$ that approximates it. Recall that we define $\mathcal{C}(\varepsilon) = \varepsilon \mathcal{C}$ for every $\varepsilon > 0$.

\begin{defi}  \label{def_casieficiente} 
Let $X$ be a normed space,  let  $\cC \subset X$ be a proper pointed co-radiant subset,  let $A\subset X$, and $\varepsilon> 0$. We say that $x_0 \in A$ is an $\varepsilon$-efficient  point  with respect to $\cC $,  written $x_0 \in AE(A, \cC, \varepsilon)$, if $ (A-x_0)\cap(-\cC(\varepsilon) ) \subset \{\cero\}$.
\end{defi}

Next, we will  consider a particular type of co-radiant sets. Then, we will prove that its associated \(\varepsilon\)-efficient solutions are solutions to a scalar optimization problem, and afterwards,  we will analyze the connections between these solutions and the \(\varepsilon\)-efficient solutions generated by a co-radiant set included in it. 

Indeed, for any $f\in X^*$ and $\alpha\in \mathbb{R}$ with $0<\alpha<\|f\|_*$,  we define the open co-radiant set  $\cK_{f,\alpha}:=\{x\in X\colon f(x)-\alpha \|x\|>1\}$ which is said to be  a Bishop-Phelps co-radiant set.

\begin{prop}\label{remark_escalarizacion_gutierrez}
Let $X$ be a normed space, $f\in X^*$, and $0<\alpha<\|f\|_*$. Then the set $\cK_{f,\alpha}$ is a proper pointed co-radiant subset.
\end{prop}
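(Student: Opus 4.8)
The plan is to verify the three defining properties---co-radiancy, properness, and pointedness---one at a time, all of them flowing from the positive homogeneity of the map $g(x):=f(x)-\alpha\|x\|$ together with the hypothesis $0<\alpha<\|f\|_*$.

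First, for co-radiancy I would record that $g$ is positively homogeneous of degree one, that is, $g(\lambda x)=\lambda g(x)$ for every $\lambda\geq 0$; this uses only the linearity of $f$ and the homogeneity of the norm. If $x\in\cK_{f,\alpha}$ then $g(x)>1>0$, so for any $\lambda\geq 1$ we get $g(\lambda x)=\lambda g(x)\geq g(x)>1$, whence $\lambda x\in\cK_{f,\alpha}$. The key point here is that the defining value is not merely positive but bounded below by $1$, so scaling up by $\lambda\geq 1$ cannot push it back below $1$.

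Next, for properness I would establish both non-emptiness and $\cK_{f,\alpha}\neq X$. Since $\alpha<\|f\|_*=\sup_{u\in S_X}f(u)$, there exists $u\in S_X$ with $f(u)>\alpha$, so $g(u)=f(u)-\alpha>0$; scaling along $u$ gives $g(tu)=t\,g(u)$, which exceeds $1$ for $t>0$ large enough, thereby producing a point of $\cK_{f,\alpha}$. On the other hand $g(\cero)=0\not>1$, so $\cero\notin\cK_{f,\alpha}$ and in particular $\cK_{f,\alpha}\neq X$. Finally, for pointedness, since $\cero\notin\cK_{f,\alpha}$ it suffices, by the definition adopted in Section~\ref{Preeliminares}, to show $\cK_{f,\alpha}\cap(-\cK_{f,\alpha})=\emptyset$. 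Were some $x$ to lie in both sets, we would have $g(x)>1$ and $g(-x)=-f(x)-\alpha\|x\|>1$ simultaneously; adding these two strict inequalities yields $-2\alpha\|x\|>2$, which is impossible because $\alpha>0$ and $\|x\|\geq 0$. Hence no such $x$ exists.

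The argument is essentially routine, and the only place where the full strength of the hypotheses enters is the non-emptiness step: the strict inequality $\alpha<\|f\|_*$ is exactly what guarantees a direction along which $g$ is strictly positive, which after scaling is pushed above $1$. I therefore regard this non-emptiness verification as the sole (mild) obstacle; the homogeneity of $g$ disposes of co-radiancy, and the additivity of the two strict inequalities disposes of pointedness.
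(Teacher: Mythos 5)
Your proof is correct and follows essentially the same route as the paper's: non-emptiness by picking $u\in S_X$ with $f(u)>\alpha$ and scaling up, properness from $\cero\notin\cK_{f,\alpha}$, and pointedness by deriving a contradiction from the two strict inequalities (the paper compares them to force $\alpha<0$, you add them to get $-2\alpha\|x\|>2$; the manipulation is the same in substance). The only difference is that you spell out the co-radiancy step, which the paper dismisses as straightforward.
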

\begin{proof}
Let us check that $\cK_{f,\alpha}\not = \emptyset$. Indeed, taking some $x_0\in S_X$ such that $f(x_0)>\alpha$, we get that $f(x_0)-\alpha \|x_0\|>0$.  Consequently, there exists $\mu>0$ such that $f(\mu x_0)-\alpha \|\mu x_0\|>1$, whence   $\mu x_0 \in \cK_{f,\alpha}$. Then, $\cK_{f,\alpha}$ is proper because $\cero \not \in \cK_{f,\alpha}$. The fact that $\cK_{f,\alpha}$ is a co-radiant subset is straightforward.  To check that $\cK$ is pointed we assume that $x\in \cK_{f,\alpha}\cap (-\cK_{f,\alpha})$. Then $f(x)+\alpha \|x\|<-1<1<f(x)-\alpha \|x\|$, that implies $\alpha<0$ (because $x\not =0_X$), a contradiction. 
\end{proof}

Here we recall that given $f\in X^*$ and $\alpha>0$, we define the  sublinear map $p_{f,\alpha}(x):=f(x)+\alpha \|x\|$,  $\forall x \in X$.
In the next proposition we show that the set of $\lambda$-efficient  points  with respect to the co-radiant set $\cK_{f,\alpha}$  can be obtained via   suitable scalarizations of the sublinear functional $p_{f, \alpha}$.

\begin{prop}\label{prop_GAMin=AE}
Let $X$ be a normed space, $f\in X^*$, $0<\alpha<\|f\|_*$, and $\lambda>0$. Then
$\mbox{AE}(A,\cK_{f,\alpha},\lambda)=\{x  \in A: p_{f, \alpha}(x'-x )\geq -\lambda \  \forall x' \in A\}$.
\end{prop}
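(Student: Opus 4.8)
The plan is to rewrite both sides in terms of the sublinear functional $p_{f,\alpha}$ and then verify the two inclusions simultaneously by a direct translation of the defining conditions; the whole statement is essentially a bookkeeping identity once the set $-\cK_{f,\alpha}(\lambda)$ is described cleanly. The crucial preliminary computation is precisely this description. First I would note that, since $\cK_{f,\alpha}(\lambda)=\lambda\,\cK_{f,\alpha}$, a point $z$ lies in $\cK_{f,\alpha}(\lambda)$ exactly when $z/\lambda\in\cK_{f,\alpha}$, that is, when $f(z/\lambda)-\alpha\|z/\lambda\|>1$; multiplying through by $\lambda>0$ and using the linearity of $f$ and the absolute homogeneity of the norm, this is equivalent to $f(z)-\alpha\|z\|>\lambda$. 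Replacing $z$ by $-w$ and using $f(-w)=-f(w)$ together with $\|-w\|=\|w\|$ then yields the clean description $-\cK_{f,\alpha}(\lambda)=\{w\in X:\ p_{f,\alpha}(w)<-\lambda\}$.

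With this identification in hand, the second step is to observe that $\cero\notin -\cK_{f,\alpha}(\lambda)$: indeed $p_{f,\alpha}(\cero)=0$, and the inequality $0<-\lambda$ fails because $\lambda>0$. Consequently, for any $x\in A$ the inclusion $(A-x)\cap(-\cK_{f,\alpha}(\lambda))\subset\{\cero\}$ appearing in Definition~\ref{def_casieficiente} is in fact equivalent to $(A-x)\cap(-\cK_{f,\alpha}(\lambda))=\varnothing$, since that intersection can never contain $\cero$. This is the only point where the restriction $\lambda>0$ is used, and it is what collapses the set-inclusion in the definition of $\varepsilon$-efficiency into a plain disjointness condition.

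Finally, I would unwind this emptiness pointwise. For $x\in A$ we have $x\in\mbox{AE}(A,\cK_{f,\alpha},\lambda)$ if and only if no $x'\in A$ satisfies $x'-x\in -\cK_{f,\alpha}(\lambda)$, that is, if and only if there is no $x'\in A$ with $p_{f,\alpha}(x'-x)<-\lambda$. Negating the inner quantifier, this reads $p_{f,\alpha}(x'-x)\geq-\lambda$ for every $x'\in A$, which is exactly membership in the right-hand set. Since each step above is an equivalence, both inclusions follow at once.

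I do not anticipate any genuine obstacle: the argument is a chain of equivalences. The only places demanding care are the sign handling in passing from $\cK_{f,\alpha}(\lambda)$ to $-\cK_{f,\alpha}(\lambda)$ and the explicit exclusion of $\cero$, which together let the inclusion ``$\subset\{\cero\}$'' be replaced by a disjointness statement; keeping these two points precise is all that is required.
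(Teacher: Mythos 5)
Your proof is correct and follows essentially the same route as the paper's: identify $-\cK_{f,\alpha}(\lambda)=\{w\in X: p_{f,\alpha}(w)<-\lambda\}$ and then translate the definition of $\varepsilon$-efficiency into the pointwise inequality via a chain of equivalences. In fact you are slightly more careful than the paper, since you justify explicitly why the inclusion $(A-x)\cap(-\cK_{f,\alpha}(\lambda))\subset\{\cero\}$ collapses to disjointness (namely $\cero\notin-\cK_{f,\alpha}(\lambda)$ because $\lambda>0$), a step the paper passes over silently.
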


\begin{proof}
Taking into account that   $-\cK_{f,\alpha}(\varepsilon)=\{x \in X\colon p_{f,\alpha}(x)<-\lambda\}$ the following chain of equivalences becomes immediate. Indeed,   
$x_0\in \mbox{AE}(A,K_{f,\alpha},\lambda)$
$\Leftrightarrow$
$(A-x_0)\cap (-\cK_{f,\alpha}(\lambda))=\varnothing$
$\Leftrightarrow$
$(A-x_0)\subset \{x \in X\colon p_{f,\alpha}(x'-x_0)\geq -\lambda\}$
$\Leftrightarrow$
$p_{f, \alpha}(x'-x_0 )\geq -\lambda \  \forall x' \in A.$

\end{proof}

\begin{remark}
In view of the previous result, it is worth noting that the points of \( \mbox{AE}(A, K_{f,\alpha}, \lambda) \) are solutions to scalar optimization problems. Therefore, the following results in this section provide conditions in terms of scalarization, as this set is used in their statements. Sometimes, necessary conditions are provided, and other times, sufficient conditions are given.
\end{remark}

The following result provides sufficient conditions for $\varepsilon$-efficient points in terms of scalarization.  Here arises the natural question whether or not every $\varepsilon$-efficient  point can be reached this way. This question remains open for future research.

\begin{theo} \label{theorem_complementaria_escalarizacion}
Let $X$ be a normed space, let $\cC \subset X$ be a co-radiant subset with $d_{\cC}>0 $, let $A\subset X$, $\varepsilon>0$, and $\lambda>0$. The following statements hold.
\begin{itemize}
\item[(i)]If $(f,\alpha) \in \cC^{a\#}_{\lambda}(\varepsilon)$, then   $\mbox{AE}(A,\cK_{f,\alpha},\lambda) \subseteq AE(A, \cC, \varepsilon)$.
\item[(ii)] If $(f,\alpha) \in \cC^{a*}_{\lambda}(\varepsilon)$ and $x_0 \in \mbox{AE}(A,\cK_{f,\alpha},\lambda)$,  
then $$\left( \left[ x_0+\{x\in X\colon p_{f,\alpha}(x)<0\}\right] \cap A \right)  \subset AE(A, \cC, \varepsilon).$$
\end{itemize}
\end{theo}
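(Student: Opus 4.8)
The plan is to prove both inclusions by contradiction, in each case confronting the scalar characterization of $\mbox{AE}(A,\cK_{f,\alpha},\lambda)$ supplied by Proposition~\ref{prop_GAMin=AE} with the defining inequality of the relevant augmented dual object. A preliminary observation that both parts exploit is that $d_{\cC}>0$ forces $\cero\notin\cC(\varepsilon)$, hence $\cero\notin-\cC(\varepsilon)$; this is what will let me conclude $(A-x_{0})\cap(-\cC(\varepsilon))=\varnothing$, which is stronger than (but implies) the containment in $\{\cero\}$ required by Definition~\ref{def_casieficiente}. The sign bookkeeping is the crux throughout, since $p_{f,\alpha}$ carries $+\alpha\|\cdot\|$ while the augmented dual conditions carry $-\alpha\|\cdot\|$; the two reconcile precisely because we will always evaluate $p_{f,\alpha}$ at negated increments.

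For (i), I would fix $x_{0}\in\mbox{AE}(A,\cK_{f,\alpha},\lambda)$ and suppose some $x'\in A$ satisfies $x'-x_{0}\in-\cC(\varepsilon)$. Setting $y:=x_{0}-x'\in\cC(\varepsilon)$, the membership $(f,\alpha)\in\cC^{a\#}_{\lambda}(\varepsilon)$ yields the strict inequality $f(y)-\alpha\|y\|>\lambda$. Since $x'-x_{0}=-y$, I compute $p_{f,\alpha}(x'-x_{0})=-f(y)+\alpha\|y\|=-(f(y)-\alpha\|y\|)<-\lambda$, contradicting the characterization $p_{f,\alpha}(x'-x_{0})\geq-\lambda$ obtained from Proposition~\ref{prop_GAMin=AE}. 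Thus no such $x'$ exists, $(A-x_{0})\cap(-\cC(\varepsilon))=\varnothing$, and $x_{0}\in AE(A,\cC,\varepsilon)$.

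Part (ii) is the substantive one. I would fix $z\in A$ with $z-x_{0}\in\{x\colon p_{f,\alpha}(x)<0\}$ and suppose, for contradiction, that some $x'\in A$ satisfies $x'-z\in-\cC(\varepsilon)$, i.e. $y:=z-x'\in\cC(\varepsilon)$. The proof then rests on collecting three inequalities: (A) the condition $(f,\alpha)\in\cC^{a*}_{\lambda}(\varepsilon)$ applied to $y$ gives $f(z-x')-\alpha\|z-x'\|\geq\lambda$; (B) the hypothesis $p_{f,\alpha}(z-x_{0})<0$ rewrites as $-f(z-x_{0})-\alpha\|z-x_{0}\|>0$; and (C) since $x_{0}\in\mbox{AE}(A,\cK_{f,\alpha},\lambda)$ and $x'\in A$, Proposition~\ref{prop_GAMin=AE} gives $f(x'-x_{0})+\alpha\|x'-x_{0}\|\geq-\lambda$. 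The key step is to \emph{add} (A), (B) and (C): the linear parts telescope, $f(z-x')-f(z-x_{0})+f(x'-x_{0})=0$, and the right-hand sides sum to $\lambda+0-\lambda=0$, with the sum strict because (B) is. What survives is $\alpha\bigl(\|x'-x_{0}\|-\|z-x'\|-\|z-x_{0}\|\bigr)>0$, and since $\alpha>0$ this forces $\|x'-x_{0}\|>\|z-x'\|+\|z-x_{0}\|$, contradicting the triangle inequality $\|x'-x_{0}\|\leq\|x'-z\|+\|z-x_{0}\|$. Hence $(A-z)\cap(-\cC(\varepsilon))=\varnothing$ and $z\in AE(A,\cC,\varepsilon)$.

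The main obstacle lies entirely in (ii): recognizing that the three available inequalities combine \emph{additively}, with the linear-functional terms cancelling to zero so that the leftover norm inequality directly violates the triangle inequality. Once this combination is identified the argument is immediate; the only delicate points are tracking the strict versus non-strict directions (it is the strictness of (B) that makes the aggregated inequality strict) and invoking $d_{\cC}>0$ so that membership in $-\cC(\varepsilon)$ genuinely rules out the trivial increment and the passage to $\varnothing$ is legitimate.
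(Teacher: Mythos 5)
Your proof is correct and follows essentially the same route as the paper's: part (i) amounts to the inclusion $\cC(\varepsilon)\subset\cK_{f,\alpha}(\lambda)$ read through Proposition~\ref{prop_GAMin=AE}, and part (ii) combines exactly the same three ingredients (the augmented-dual bound, the hypothesis $p_{f,\alpha}(z-x_0)<0$, and the scalar characterization of $\mbox{AE}(A,\cK_{f,\alpha},\lambda)$) with the triangle inequality, which the paper packages as subadditivity of $p_{f,\alpha}$ after reducing to $x_0=\cero$ and translating, whereas you handle general $x_0$ in one telescoping computation. One cosmetic caveat: the paper's definition of $\cC^{a*}_{\lambda}(\varepsilon)$ only requires $\alpha\in\mathbb{R}_+$, so rather than saying ``since $\alpha>0$'' you should conclude by noting that $\alpha\bigl(\|x'-x_0\|-\|z-x'\|-\|z-x_0\|\bigr)>0$ is impossible outright, because $\alpha\geq 0$ while the bracket is $\leq 0$ by the triangle inequality, which covers the case $\alpha=0$ as well.
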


\begin{proof}

(i) The homogeneity of $f(\cdot)-\alpha \|\cdot\| $  and  $(f,\alpha) \in \cC^{a\#}_{\lambda}(\varepsilon)$  yield $ \cC(\varepsilon)\subset  \cK_{f,\alpha}(\lambda)$, and the inclusion becomes immediate.

(ii) We first consider the case $x_0=0_X$.  
Pick any $z\in \{x\in X\colon p_{f,\alpha}(x)<0\}\cap \cA$, we will check that $A\cap (z-\cC(\varepsilon))\subset \{z\}$.
Assume, contrary to our claim, that there exists $y\in A\cap (z-\cC(\varepsilon))$, $y\not = z$.
Since $y-z\in -\cC(\varepsilon)$ and the function $p_{f,\alpha}$ is sublinear, it follows that $p_{f,\alpha}(y)-p_{f,\alpha}(z)\leq p_{f,\alpha}(y-z)\leq -\lambda$.
On the other hand, as $0_X\in \mbox{AE}(A-x_0, \cK_{f,\alpha},\lambda)$, by Proposition \ref{prop_GAMin=AE} we obtain that $p_{f,\alpha}(y)\geq -\lambda$. Since, in addition, it holds that $p_{f,\alpha}(z)<0$, we obtain the inequality $p_{f,\alpha}(y)-p_{f,\alpha}(z)\geq -\lambda-p_{f,\alpha}(z)>-\lambda$ which contradicts the previous one. 
Now, consider the case $x_0\not =0_X$. 
Since $x_0 \in \mbox{AE}(A,\cK_{f,\alpha},\lambda)$, by Proposition \ref{prop_GAMin=AE} we get that  $ p_{f,\alpha}(x) \geq -\lambda \ \forall x \in A-x_0$.  
Now, by the previous case, we have $ \{x\in X\colon p_{f,\alpha}(x)<0\} \cap  \left(A-x_0 \right)  \subset \mbox{AE}(A-x_0, \cC, \varepsilon).$
By translation, taking into account the sublinearity of $p_{f,\alpha}$ we get the inclusion $\left( \left[ x_0+\{x\in X\colon p_{f,\alpha}(x)<0\}\right] \cap A \right)  \subset \mbox{AE}(A, \cC, \varepsilon)$.

\end{proof}

The following definition was introduced for the first time in \cite{Sayadibander2017}. Let us recall that a set is said to be solid if it has  non-empty interior.
\begin{defi}
Given \( \varepsilon \geq 0 \), a subset \( A \subset X \), and a proper pointed co-radiant subset \( \cC \subset X \), we say that a point \( x_0 \in A \) is an \( \varepsilon \)-properly efficient point with respect to \( \cC \), written \( x_0 \in PAE(A, \cC, \varepsilon) \), if there exists a proper solid pointed co-radiant subset \( \cK\subset X \) such that \( \cC \subset \mathrm{int}(\cK) \) and \( x_0 \in AE(A, \cK, \varepsilon) \).
\end{defi}

\begin{prop} \label{prop_proper_eficiencia}
Let $X$ be a normed space, let $\cC \subset X$ be a co-radiant subset  such that $d_{\cC} >0$,  let $A\subset X$, $\varepsilon>0$, and $\lambda>0$.  If $(f,\alpha) \in \cC^{a\#}_{\lambda}(\varepsilon)$, then $ AE(A, \cK_{f,\alpha}, \lambda) \subset \mbox{PAE}(A, \cC, \varepsilon)$.
\end{prop}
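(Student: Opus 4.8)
The plan is to produce, for a given $x_0 \in AE(A,\cK_{f,\alpha},\lambda)$, an explicit witness co-radiant set certifying proper $\varepsilon$-efficiency. The natural candidate, dictated by the positive homogeneity of $p_{f,\alpha}$ and the need to match the parameter $\varepsilon$ of the target notion with the parameter $\lambda$ of the hypothesis, is the rescaled Bishop-Phelps co-radiant set
\[
\cK := \frac{\lambda}{\varepsilon}\,\cK_{f,\alpha} = \Bigl\{ x \in X \colon f(x) - \alpha\|x\| > \tfrac{\lambda}{\varepsilon} \Bigr\},
\]
chosen precisely so that $\cK(\varepsilon) = \varepsilon\cK = \lambda\cK_{f,\alpha} = \cK_{f,\alpha}(\lambda)$. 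As a preliminary observation I would record, exactly as in the proof of Theorem~\ref{theorem_complementaria_escalarizacion}(i), that the hypothesis $(f,\alpha)\in\cC^{a\#}_{\lambda}(\varepsilon)$ together with the homogeneity of $f(\cdot)-\alpha\|\cdot\|$ yields the inclusion $\cC(\varepsilon)\subset \cK_{f,\alpha}(\lambda)$; I would also note that since $f \in \cC^{\#}$ and $f(y)-\alpha\|y\| > \lambda > 0$ on the nonempty set $\cC(\varepsilon)$, one has $\alpha < \|f\|_*$, so that $\cK_{f,\alpha}$ is a genuine Bishop-Phelps co-radiant set.

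Next I would verify that $\cK$ is a proper solid pointed co-radiant set, as required by the definition of $\mbox{PAE}$. Since $\cK$ is a positive multiple of $\cK_{f,\alpha}$, each property transfers from $\cK_{f,\alpha}$ by Proposition~\ref{remark_escalarizacion_gutierrez}: co-radiancy, pointedness and properness are all invariant under scaling by a positive constant (for pointedness one uses $\cero\notin\cK_{f,\alpha}$, so $\cK\cap(-\cK)=\tfrac{\lambda}{\varepsilon}(\cK_{f,\alpha}\cap(-\cK_{f,\alpha}))=\emptyset$), while solidity follows from the fact that $\cK$ is open, being the preimage of $(\tfrac{\lambda}{\varepsilon},+\infty)$ under the continuous map $x\mapsto f(x)-\alpha\|x\|$.

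With $\cK$ in hand, the two remaining conditions are immediate from the scaling relations. Dividing $\cC(\varepsilon)\subset \cK_{f,\alpha}(\lambda)$ by $\varepsilon$ gives $\cC \subset \tfrac{1}{\varepsilon}\cK_{f,\alpha}(\lambda)=\cK$, and since $\cK$ is open we conclude $\cC \subset \mbox{int}(\cK)$. For the efficiency, the assumption $x_0\in AE(A,\cK_{f,\alpha},\lambda)$ reads $(A-x_0)\cap(-\cK_{f,\alpha}(\lambda))\subset\{\cero\}$; substituting the identity $\cK(\varepsilon)=\cK_{f,\alpha}(\lambda)$ yields $(A-x_0)\cap(-\cK(\varepsilon))\subset\{\cero\}$, that is $x_0\in AE(A,\cK,\varepsilon)$. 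Having exhibited a proper solid pointed co-radiant $\cK$ with $\cC\subset\mbox{int}(\cK)$ and $x_0\in AE(A,\cK,\varepsilon)$, the definition of $\varepsilon$-proper efficiency delivers $x_0\in\mbox{PAE}(A,\cC,\varepsilon)$.

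The only delicate point, and the step I would watch most carefully, is the upgrade from mere containment $\cC\subset\cK$ to containment in the interior $\cC\subset\mbox{int}(\cK)$. This is exactly where the strict inequality ``$>\lambda$'' defining $\cC^{a\#}_{\lambda}(\varepsilon)$ (as opposed to the ``$\geq\lambda$'' in $\cC^{a*}_{\lambda}(\varepsilon)$) is indispensable: it forces the witness set $\cK$ to be open, which is precisely what makes it a \emph{solid} co-radiant set with $\cC$ sitting in its interior. Everything else reduces to routine bookkeeping with the scalings $\cC(\varepsilon)=\varepsilon\cC$ and $\cK(\varepsilon)=\varepsilon\cK$.
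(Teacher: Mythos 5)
Your proof is correct and takes essentially the same approach as the paper: you construct the identical witness set $\cK=\{x\in X\colon f(x)-\alpha\|x\|>\tfrac{\lambda}{\varepsilon}\}$, verify that it is a proper solid pointed co-radiant set containing $\cC$ in its interior, and conclude $x_0\in AE(A,\cK,\varepsilon)$ via the identity $\cK(\varepsilon)=\cK_{f,\alpha}(\lambda)$. The only cosmetic differences are that you substitute this identity directly where the paper argues by contradiction, and you spell out the verifications the paper dismisses as ``clear.''
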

\begin{proof}
Consider $(f,\alpha)\in \cC^{a\#}_{\lambda}(\varepsilon)$ and define $\cK:=\{x \in X\colon f(x)-\alpha \|x\|>\frac{\lambda}{\varepsilon}\}$.  It is clear that $\cK$ is a proper solid pointed co-radiant set containing $\cC$. Now fix an arbitrary $x_0\in AE(A, \cK_{f,\alpha}, \lambda)$, we will check that $x_0\in \mbox{AE}(A,\cK,\varepsilon)$ by showing that $(A-x_0)\cap (-\cK(\varepsilon))\subset \{0_X\}$.
If $y\in A-x_0$,  since $x_0\in AE(A, \cK_{f,\alpha}, \lambda)$, it follows that $p_{f,\alpha}(y)=f(y)+\alpha \|y\|\geq -\lambda$. On the other hand, since $y \in -\cK(\varepsilon)$, it follows that $y\not= \cero$ and $f(y)+\alpha\|y\|<-\lambda$, a contradiction.  As a consequence, $x_0\in \mbox{PAE}(A,\cC,\varepsilon)$.
\end{proof}

In Theorem \ref{theorem_complementaria_escalarizacion} and Proposition \ref{prop_proper_eficiencia}, the assumption that the augmented dual cones are non-empty is crucial for the results to hold. The following result shows that those augmented dual cones are non-empty if the cone generated by the co-radiant set \( \cC \) has a bounded base.

\begin{prop}\label{prop_augmented_dual_cone_nonempty}
Let $X$ be a normed space and let $\cC\subset X$ be a co-radiant subset  such that $d_{\cC} >0$. If $\mbox{cone}(\cC)$ has a bounded base, then there exists $\lambda>0$ such that
\( \cC^{a*}_{\lambda}(\varepsilon)\not = \emptyset \) and \( \cC^{a\#}_{\frac{\lambda}{2}}(\varepsilon)\not = \emptyset \), $\forall \varepsilon>0$.  
\end{prop}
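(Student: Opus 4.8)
The plan is to reduce the $\varepsilon$-dependent statement to a single Bishop--Phelps-type domination inequality for the cone $C:=\mbox{cone}(\cC)$, and then to recover the uniform $\lambda$ by rescaling the separating functional by $1/\varepsilon$. First I would record the elementary fact that $C^{\#}\subseteq \cC^{\#}$: since $d_{\cC}>0$ forces $\cero\notin\cC$, one has $\cC\subseteq C\setminus\{\cero\}$, so any $f$ with $f(x)>0$ on $C\setminus\{\cero\}$ is automatically in $\cC^{\#}$. Thus it suffices to manufacture the witnessing pairs inside $C^{\#}\times\RR_+$.

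The core step is to extract, from the bounded base, a functional dominating the norm on $C$. Let $B_0$ be a bounded base of $C$, so $B_0$ is convex, $\cero\notin\overline{B_0}$, and $\sup_{b\in B_0}\|b\|=:M<\infty$. Applying the strong separation form of Hahn--Banach to the compact set $\{\cero\}$ and the closed convex set $\overline{B_0}$, I obtain $f\in X^*$ and $\gamma>0$ with $f(b)\geq\gamma$ for every $b\in B_0$. Writing each $x\in C\setminus\{\cero\}$ uniquely as $x=\lambda_x b_x$ with $\lambda_x>0$, $b_x\in B_0$, boundedness gives $\lambda_x=\|x\|/\|b_x\|\geq\|x\|/M$, whence $f(x)=\lambda_x f(b_x)\geq (\gamma/M)\|x\|$. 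In particular $f\in C^{\#}$, and setting $\alpha_0:=\gamma/(2M)>0$ yields $f(x)-\alpha_0\|x\|\geq\alpha_0\|x\|$ for all $x\in C$.

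Finally I would rescale to obtain $\varepsilon$-uniformity. Put $\lambda:=\alpha_0 d_{\cC}>0$ (this is where $d_{\cC}>0$ enters) and, for each $\varepsilon>0$, consider the pair $(f_\varepsilon,\alpha_\varepsilon):=(f/\varepsilon,\alpha_0/\varepsilon)\in\cC^{\#}\times\RR_+$. For $y\in\cC(\varepsilon)=\varepsilon\cC$, writing $y=\varepsilon x$ with $x\in\cC$ and using positive homogeneity, $f_\varepsilon(y)-\alpha_\varepsilon\|y\|=f(x)-\alpha_0\|x\|\geq\alpha_0\|x\|\geq\alpha_0 d_{\cC}=\lambda$. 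Hence $(f_\varepsilon,\alpha_\varepsilon)\in\cC^{a*}_{\lambda}(\varepsilon)$, so that set is nonempty; and since $\lambda>\lambda/2$, the same pair satisfies the strict inequality $f_\varepsilon(y)-\alpha_\varepsilon\|y\|>\lambda/2$, giving $(f_\varepsilon,\alpha_\varepsilon)\in\cC^{a\#}_{\frac{\lambda}{2}}(\varepsilon)$. As $\varepsilon>0$ was arbitrary and $\lambda$ is fixed, the proof concludes.

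I expect the only genuine obstacle to be the correct reading of the quantifiers: the witnessing pair must be allowed to depend on $\varepsilon$ while $\lambda$ stays fixed, which is precisely what dictates the $1/\varepsilon$ rescaling. The other point requiring care is the conversion of the bounded base into the norm-domination $f(x)\geq(\gamma/M)\|x\|$ via separation together with boundedness of $B_0$, though once the separating functional is produced this step is routine.
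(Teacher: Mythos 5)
Your proof is correct, and it differs from the paper's own argument in one substantive way. The shared core is the passage from a bounded base to a norm-dominating functional on $C:=\mathrm{cone}(\cC)$: the paper obtains this in one line by citing \cite[Theorem 1.1]{GARCIACASTANO20151178}, which gives $f\in S_{X^*}$ with $\gamma:=\inf_{C\cap S_X}f>0$, whereas you re-derive it from scratch via Hahn--Banach separation of $\{\cero\}$ from $\overline{B_0}$ combined with boundedness of the base; both routes yield $f(x)\geq\mathrm{const}\cdot\|x\|$ on $C$ and then invoke $d_{\cC}>0$ to produce a positive margin, so up to this point the approaches are essentially the same (yours being merely more self-contained). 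The genuine divergence is in the handling of the quantifier on $\varepsilon$. The paper keeps the single pair $(f,\tfrac{\gamma}{2})$ fixed for every $\varepsilon$, obtains $f(x)-\tfrac{\gamma}{2}\|x\|\geq\tfrac{\gamma\varepsilon d_{\cC}}{2}$ for $x\in\cC(\varepsilon)$, and then sets $\lambda:=\tfrac{\gamma\varepsilon d_{\cC}}{2}$, a quantity that depends on $\varepsilon$; as written, this establishes only the weaker assertion ``for every $\varepsilon>0$ there exists $\lambda>0$,'' and indeed with a fixed pair no uniform $\lambda$ can exist, since $\inf\{f(y)-\alpha\|y\|: y\in\cC(\varepsilon)\}\to 0$ as $\varepsilon\to 0$. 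Your $1/\varepsilon$-rescaling of the witnessing pair $(f/\varepsilon,\alpha_0/\varepsilon)$, which fixes $\lambda=\alpha_0 d_{\cC}$ once and for all while letting the pair vary with $\varepsilon$, is exactly what the stated quantifier order (``there exists $\lambda$ \dots\ $\forall\varepsilon>0$'') requires, and it is legitimate because the sets $\cC^{a*}_{\lambda}(\varepsilon)$ and $\cC^{a\#}_{\lambda/2}(\varepsilon)$ only ask for the existence of \emph{some} admissible pair. So your argument not only is valid but actually delivers the proposition as literally stated, closing a quantifier gap in the paper's own proof; note, however, that the paper's subsequent use of this result (e.g.\ the membership $(f,\alpha)\in\cC^{a\#}_{\lambda\varepsilon}(\varepsilon)$ appearing in the proof of Theorem \ref{coro_1:escalarizacion22}) suggests that the $\varepsilon$-dependent, fixed-pair reading is what the authors actually rely on downstream.
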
 
\begin{proof}
Let $\cC$ be a co-radiant set such that $d_{\cC}>0$ and $C:=\mbox{cone}(\cC)$ has a bounded base.  By \cite[Theorem 1.1]{GARCIACASTANO20151178},  there exists $f\in S_{X^*}$ such that $\gamma:=\inf_{C\cap S_X}f>0$.  Then $f(\frac{c}{\|c\|})\geq \gamma$, $\forall c \in \cC$ $\Rightarrow$ $f(c)-\gamma \|c\|\geq 0$, $\forall c \in \cC$.  Now, fix arbitrary $\varepsilon>0$ and $x\in \cC(\varepsilon)$. Then $f(x)-\frac{\gamma}{2} \|x\|=f(x)-\gamma \|x\|+\frac{\gamma}{2} \|x\|\geq \frac{\gamma}{2} \|x\|\geq \frac{\gamma \varepsilon d_{\cC}}{2}$.
Then, taking $\lambda:=\frac{\gamma \varepsilon d_{\cC}}{2}$ we have that $(f,\frac{\gamma}{2}) \in \cC^{a*}_{\lambda}(\varepsilon)$ and $(f,\frac{\gamma}{2}) \in \cC^{a\#}_{\frac{\lambda}{2}}(\varepsilon)$.
\end{proof}

\begin{defi}
Let $X$ be a normed space and let $\cC\subset X$ be a co-radiant subset such that $d_{\cC} >0$ and consider $0<\delta<d_{\cC}$. We define the $\delta$-co-radiant neighborhood of $\cC$ by $$\cC_{\delta}:=\mbox{co-rad}(\cC+\delta B_X).$$
\end{defi}
It is clear that $AE(A,\cC_{\delta},\varepsilon)\subset PAE(A,\cC,\varepsilon)$. Furthermore, the following result suggests that the set $AE(A,\cC_{\delta},\varepsilon)$ provides a good approximation of $AE(A,\cC,\varepsilon)$  for small values of $\delta>0$.

\begin{lemma}
Let $X$ be a normed space and let $\cC\subset X$ be a co-radiant subset such that $d_{\cC} >0$.  The following assertions hold.
\begin{itemize}
\item[(i)] $0\not \in \overline{\cC_{\delta}}$, for every $0<\delta
< d_{\cC}$.
\item[(ii)]  If $0<\delta_1<\delta_2<d_{\cC}$, then $\cC_{\delta_1}\subset \cC_{\delta_2}$.
\item[(iii)] $\cap_{0<\delta
< d_{\cC}}\cC_{\delta} =\overline{\cC}$.
\end{itemize}
\end{lemma}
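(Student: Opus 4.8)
The plan is to settle (i) and (ii) by elementary norm and monotonicity arguments and to concentrate the effort on the double inclusion in (iii). For (i), every element of $\cC_\delta$ has the form $\lambda(c+\delta b)$ with $\lambda\geq 1$, $c\in\cC$, and $b\in B_X$, so
\[
\|\lambda(c+\delta b)\|=\lambda\|c+\delta b\|\geq \lambda(\|c\|-\delta\|b\|)\geq \lambda(d_{\cC}-\delta)\geq d_{\cC}-\delta>0,
\]
using $\|c\|\geq d_{\cC}$, $\|b\|\leq 1$, $\lambda\geq 1$, and $\delta<d_{\cC}$. Hence $\cC_\delta$, and therefore $\overline{\cC_\delta}$, is contained in the closed set $\{x:\|x\|\geq d_{\cC}-\delta\}$, which excludes $0$; thus $0\notin\overline{\cC_\delta}$. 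For (ii), $\delta_1<\delta_2$ gives $\delta_1 B_X\subset\delta_2 B_X$ and hence $\cC+\delta_1 B_X\subset\cC+\delta_2 B_X$; since the co-radiant hull is monotone under inclusion, $\cC_{\delta_1}\subset\cC_{\delta_2}$.

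For (iii), the inclusion $\overline{\cC}\subset\bigcap_{0<\delta<d_{\cC}}\cC_\delta$ is immediate: given $x\in\overline{\cC}$ and any admissible $\delta$, I choose $c\in\cC$ with $\|x-c\|\leq\delta$, so that $x=c+(x-c)\in\cC+\delta B_X\subset\cC_\delta$. The reverse inclusion is the crux. I would fix $x\in\bigcap_\delta\cC_\delta$ (note $x\neq 0$ by (i)) and, for each admissible $\delta$, write $x=\lambda_\delta(c_\delta+\delta b_\delta)$ with $\lambda_\delta\geq 1$, $c_\delta\in\cC$, $b_\delta\in B_X$. The decisive estimate is
\[
\|x\|=\lambda_\delta\|c_\delta+\delta b_\delta\|\geq \lambda_\delta(d_{\cC}-\delta),
\]
which yields $\lambda_\delta\leq \|x\|/(d_{\cC}-\delta)$ and thus confines the scalars $\lambda_\delta$ to a bounded subinterval of $[1,+\infty)$.

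Specializing to $\delta=1/n$, I would apply Bolzano--Weierstrass to the bounded real sequence $(\lambda_{1/n})_n$ to extract a subsequence with $\lambda_{1/n_k}\to\lambda^*\in[1,\|x\|/d_{\cC}]$. Combining $\|c_{1/n_k}-x/\lambda_{1/n_k}\|\leq 1/n_k\to 0$ with $x/\lambda_{1/n_k}\to x/\lambda^*$ gives $c_{1/n_k}\to x/\lambda^*$ by the triangle inequality, so $x/\lambda^*\in\overline{\cC}$. Since the closure of a co-radiant set is again co-radiant and $\lambda^*\geq 1$, it follows that $x=\lambda^*\,(x/\lambda^*)\in\overline{\cC}$, completing the reverse inclusion and hence the equality in (iii). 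I expect the main obstacle to be exactly this reverse inclusion: one must control the joint limiting behavior of the scaling factors $\lambda_\delta$ and the base points $c_\delta$ as $\delta\to 0$. The saving observation is that, even when $X$ is infinite dimensional, the $\lambda_\delta$ are real numbers and so admit a convergent subsequence by compactness of bounded intervals, and this convergence, fed through the norm estimate, forces the corresponding base points to converge to a point of $\overline{\cC}$.
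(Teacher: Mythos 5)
Your proof is correct and follows essentially the same route as the paper's: the reverse inclusion in (iii) is handled identically, by writing $x=\lambda_\delta(c_\delta+\delta b_\delta)$, using $d_{\cC}>0$ to bound the scalars $\lambda_\delta$, extracting a convergent subsequence $\lambda^*\geq 1$, deducing $x/\lambda^*\in\overline{\cC}$, and invoking co-radiance of $\overline{\cC}$. The only cosmetic differences are that you spell out (i), (ii), and the easy inclusion directly, where the paper dismisses (i)--(ii) as immediate and cites the standard identity $\overline{\cC}=\bigcap_{n\geq 1}\bigl(\cC+\tfrac{1}{n}B_X\bigr)$.
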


\begin{proof}
It is only necessary to check  (iii).  To show $\subset$ we fix $n_0 \geq 1$ such that $\frac{1}{n_0}< d_C$ and pick an arbitrary $x\in \cap _{n > n_0}\cC_{\frac{1}{n}}$.  For every $n \geq n_0$ we  write $x=\lambda_n x_n=\lambda_n(c_n+u_n)$, for $\lambda_n \geq 1$,  $c_n \in \cC$, and $u_n\in B(0,\frac{1}{n})$.  Then $\frac{\|x\|}{\lambda_n}=\|x_n\|=\|u_n+c_n\|\geq \|c_n\|-\|u_n\|>d_{\cC}-\frac{1}{n_0}>0$. Then the sequence $(\lambda_n)_n$ is bounded and, as a consequence, we can assume that it converges to some $\lambda\geq 1$.  Now, we have $\|\frac{x}{\lambda}-c_n\|\leq |\frac{x}{\lambda}-\frac{x}{\lambda_n}\|+|\frac{x}{\lambda_n}-c_n\|=|\frac{x}{\lambda}-\frac{x}{\lambda_n}\|+\|u_n\|$, for every $n \geq n_0$. It follows that $\frac{x}{\lambda} \in \overline{\cC}$. Finally, since $\cC$ is a co-radiant set, so is $\overline{\cC}$. Then $x \in \overline{\cC}$. The inclusion $\supset$ is a direct consequence of the known equality $\overline{\cC}=\cap_{n\geq 1}(\cC + \frac{1}{n}B_X)$.
\end{proof}

In the following result, we obtain, under the assumption of the SSP, a necessary condition for $\varepsilon$-properly efficiency in terms of scalarization.  Since  $\mbox{PAE}(A, \cC, \varepsilon)\subset \mbox{AE}(A, \cC, \varepsilon)$, this result is related to the question arisen before Theorem \ref{theorem_complementaria_escalarizacion}.

\begin{theo}\label{coro_1:escalarizacion22}
Let $X$ be a normed space and let \( \cC \subset X \) be a co-radiant subset with \( d_{\cC} > 0 \), and let \( A \subset X \). Assume that there exist \( \delta \in \mathbb{R} \) with \( 0 < \delta < d_{\cC} \), and \( n \in \mathbb{N} \) such that \( \cC \cap n S_X \neq \emptyset \), and the pair \( (\mathrm{cone}(\cC), \mathrm{cone}(\cC_{\delta} \cap n S_X)) \) satisfies the SSP. Then, there exist \( \alpha_1, \alpha_2 \in \mathbb{R} \) with \( 0 < \alpha_1 < \alpha_2 \leq 1 \), \( f \in S_{X^*} \), and \( 0 < \eta \leq 1 \) such that:
\begin{itemize}
\item[(i)] $\lambda := \inf \left\{ f(x') - \alpha \| x' \| : x' \in \cC,\, \alpha \in (\alpha_1, \alpha_2) \right\} > 0$.
\item[(ii)] 
If \( 0 < \varepsilon < 1 \), \( \alpha \in (\alpha_1, \alpha_2) \), and \( x_0 \in AE(A, \cC_{\delta}(\eta), \varepsilon) \), then:
\begin{equation} \label{rdo1}
p_{f,\alpha}(x - x_0) > -\lambda \varepsilon, \quad \forall x \in A,
\end{equation}
and
\begin{equation*} \label{rdo2}
\left[ x_0 + \left\{ x \in X : p_{f,\alpha}(x) < 0 \right\} \right] \cap A \subset AE(A, \cC, \varepsilon).
\end{equation*}
\end{itemize}
\end{theo}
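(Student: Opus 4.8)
The plan is to derive the whole statement from the general separation theorem, Theorem~\ref{teoolarioseparacioncoradiantesAmpliado}, applied to the pair $\cC$ and $\cK:=\cC_{\delta}$, and then to feed the resulting inequality into Theorem~\ref{theorem_complementaria_escalarizacion}(ii).

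First I would verify that $\cC$ and $\cC_{\delta}$ satisfy the hypotheses of Theorem~\ref{teoolarioseparacioncoradiantesAmpliado}. Since $0<\delta<d_{\cC}$, the preceding lemma gives $\cero\notin\overline{\cC_{\delta}}$, so $d_{\cC_{\delta}}>0$ and hence $d_{\cC}d_{\cC_{\delta}}>0$. The chain $\cC\subset \cC+\delta B_X\subset\cC_{\delta}$ shows both that $\overline{\mbox{co}}(\cC)\cap\cC_{\delta}\supset\cC\neq\varnothing$ and that $\cC\cap nS_X\subset\cC_{\delta}\cap nS_X$, so the assumed SSP of $(\mbox{cone}(\cC),\mbox{cone}(\cC_{\delta}\cap nS_X))$ is exactly the SSP hypothesis of that theorem for $\cK_n=\cC_{\delta}\cap nS_X$. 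Applying it yields $0<\alpha_1<\alpha_2$, $f\in S_{X^*}$ and some $0<\eta\leq 1$ with $\lambda:=\inf\{f(x')-\alpha\|x'\|:x'\in\cC,\ \alpha\in(\alpha_1,\alpha_2)\}>0$, which is exactly (i), together with the separation
$$ f(y')-\alpha\|y'\|<\lambda<f(y)-\alpha\|y\|, $$
valid for all $\alpha\in(\alpha_1,\alpha_2)$, $y'\in X\setminus\mbox{int}(\cC_{\delta}(\eta))$ and $y\in\overline{\mbox{co}}(\cC)$. Taking $c_0\in\cC\cap nS_X$ (nonempty by hypothesis), the bound $f(c_0)-\alpha n\geq\lambda>0$ with $f(c_0)\leq n$ forces $\alpha<1=\|f\|_*$ for every $\alpha\in(\alpha_1,\alpha_2)$, so $\alpha_2\leq 1$; in particular each $\cK_{f,\alpha}$ is well defined.

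The heart of the argument is \eqref{rdo1}. Fix $0<\varepsilon<1$, $\alpha\in(\alpha_1,\alpha_2)$ and $x_0\in AE(A,\cC_{\delta}(\eta),\varepsilon)$, i.e.\ $(A-x_0)\cap(-\cC_{\delta}(\eta\varepsilon))\subset\{\cero\}$, since $(\cC_{\delta}(\eta))(\varepsilon)=\eta\varepsilon\,\cC_{\delta}=\cC_{\delta}(\eta\varepsilon)$. Suppose, for a contradiction, that some $x\in A$ satisfies $p_{f,\alpha}(x-x_0)\leq-\lambda\varepsilon$, and put $w:=x_0-x$, so that $f(w)-\alpha\|w\|\geq\lambda\varepsilon>0$, forcing $w\neq\cero$. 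By positive homogeneity of $f$ and of the norm, $f(w/\varepsilon)-\alpha\|w/\varepsilon\|\geq\lambda$; the contrapositive of the left-hand separation then places $w/\varepsilon\in\mbox{int}(\cC_{\delta}(\eta))$, whence $w\in\varepsilon\,\mbox{int}(\cC_{\delta}(\eta))=\mbox{int}(\cC_{\delta}(\eta\varepsilon))\subset\cC_{\delta}(\eta\varepsilon)$. Thus $x-x_0=-w\in-\cC_{\delta}(\eta\varepsilon)$ with $x-x_0\neq\cero$, contradicting the $\varepsilon$-efficiency of $x_0$. This proves $p_{f,\alpha}(x-x_0)>-\lambda\varepsilon$ for all $x\in A$.

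Finally, \eqref{rdo2} follows by applying Theorem~\ref{theorem_complementaria_escalarizacion}(ii) with dual parameter $\lambda\varepsilon$ in place of $\lambda$. Indeed, \eqref{rdo1} together with Proposition~\ref{prop_GAMin=AE} gives $x_0\in AE(A,\cK_{f,\alpha},\lambda\varepsilon)$. Moreover, the definition of $\lambda$ yields $f(c)-\alpha\|c\|\geq\lambda>0$ for every $c\in\cC$, so $f\in\cC^{\#}$, and for $y=\varepsilon c\in\cC(\varepsilon)$ we obtain $f(y)-\alpha\|y\|=\varepsilon(f(c)-\alpha\|c\|)\geq\lambda\varepsilon$; hence $(f,\alpha)\in\cC^{a*}_{\lambda\varepsilon}(\varepsilon)$. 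Theorem~\ref{theorem_complementaria_escalarizacion}(ii) then gives $[x_0+\{x\in X:p_{f,\alpha}(x)<0\}]\cap A\subset AE(A,\cC,\varepsilon)$, which is \eqref{rdo2}. The main obstacle is the homogeneity bookkeeping in \eqref{rdo1}: one must track that scaling a co-radiant set by a smaller factor \emph{enlarges} it, so that the separation stated at level $\lambda$ for $\cC_{\delta}(\eta)$ transfers correctly to the efficiency condition at level $\lambda\varepsilon$ for $\cC_{\delta}(\eta\varepsilon)$, and one must fix the sign convention $w=x_0-x$ so that the two inequalities line up.
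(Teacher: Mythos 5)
Your proof is correct and takes essentially the same route as the paper's: both apply Theorem~\ref{teoolarioseparacioncoradiantesAmpliado} to the pair $(\cC,\cC_{\delta})$, deduce $\alpha_2\leq 1$ from $f\in S_{X^*}$, obtain inequality \eqref{rdo1} by rescaling the separation inequality by the factor $\varepsilon$, and derive the final inclusion into $AE(A,\cC,\varepsilon)$ from Theorem~\ref{theorem_complementaria_escalarizacion}(ii) applied with parameter $\lambda\varepsilon$ in place of $\lambda$. The differences are only cosmetic: you prove \eqref{rdo1} by contradiction where the paper argues directly, and you make explicit some steps the paper leaves implicit (the hypothesis check via $\cC\subset\cC_{\delta}$, the membership $(f,\alpha)\in\cC^{a*}_{\lambda\varepsilon}(\varepsilon)$, and the use of Proposition~\ref{prop_GAMin=AE} to place $x_0\in AE(A,\cK_{f,\alpha},\lambda\varepsilon)$).
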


\begin{proof}
The condition $\cC\cap nS_X\not = \emptyset$ implies that $\overline{\mbox{co}}(\cC)\cap \cC_{\delta}\cap \mbox{cone}(\cC_{\delta}\cap nS_X)\not =\emptyset$.  Then, we apply Theorem  \ref{teoolarioseparacioncoradiantesAmpliado} and we have $f \in S_{X^*}$,  $0<\alpha_1<\alpha_2$ such that $\lambda:=\inf\{f(x')- \alpha \| x' \|: x' \in \cC,\,\alpha\in(\alpha_1,\alpha_2)\}>0$,  and there exists $0<\eta\leq 1$ such that for  $ f(y)- \alpha\|y\| <\lambda  <  f(x')- \alpha\|x'\|$,  for every $\alpha_1<\alpha<\alpha_2$, $y \in X \setminus \mbox{int}(\cC_{\delta}(\eta))$ and $x' \in \overline{\mbox{co}}(\cC)$.  To check $\alpha_2\leq 1$, consider arbitrary $\alpha_1<\alpha<\alpha_2$ and $x' \in \cC$. Then $f(x')>\alpha \|x'\|$, which implies that $f(\frac{x'}{\|x'\|})>\alpha$, $\forall x' \in \cC$.  As a consequence,  $f\in  \cC^{\#}$ and $1>\alpha$.  As $\alpha_1<\alpha<\alpha_2$ was arbitrarily taken, the inequality is proved.   In particular,  we also have that $(f,\alpha)\in \cC^{a\#}_{\lambda \varepsilon}(\varepsilon)$, $\forall \varepsilon>0$. On the other hand, it is clear that $p_{f,\alpha}(x')< - \lambda  < p_{f,\alpha}(y)$, for every $\alpha_1<\alpha<\alpha_2$, $y \in Y \setminus \mbox{int}(-\cC_{\delta}(\eta))$ and $x' \in \overline{\mbox{co}}(-\cC)$. Let us fix some $0<\varepsilon<1$. Then, $p_{f,\alpha}(y)> -\lambda \varepsilon$, for every $y \in Y \setminus \mbox{int}(-\cC_{\delta}(\eta\varepsilon))$ and $\alpha_1<\alpha<\alpha_2$.  Finally, since $x_0 \in \mbox{AE}(A, \cC_{\delta}(\eta), \varepsilon)$, we have the inclusion $A \subset X\setminus (x_0-\cC_{\delta}(\eta\varepsilon))$, which implies that $p_{f,\alpha}(x-x_0) > -\lambda \varepsilon$, for $x \in A$ 
proving (\ref{rdo1}).  Finally, since $(f,\alpha)\in \cC^{a*}_{\lambda\varepsilon}(\varepsilon)$, we can apply Theorem \ref{theorem_complementaria_escalarizacion} and get
\begin{equation*}
\left( \left[ x_0+\{x\in X\colon p_{f,\alpha}(x)<0\}\right] \cap A) \right)  \subset AE(A, \cC, \varepsilon).
\end{equation*}
\end{proof}

\begin{remark}
Theorem \ref{coro_1:escalarizacion22} is a version of \cite[Theorem 6]{Sayadibander2017} for infinite-dimensional spaces, in which we have removed the requirements for the co-radiant subset $\cC$ to be convex, closed, and having a compact base. 
\end{remark}

\begin{remark}
Proposition \ref{prop_GAMin=AE} and Theorem \ref{theorem_complementaria_escalarizacion} provide a sublinear scalarization method and a sufficient condition for $\varepsilon$-efficiency.  In
Proposition \ref{prop_proper_eficiencia} such a scalarization gets sufficient conditions for $\varepsilon$-properly efficiency.  In Theorem  \ref{theorem_complementaria_escalarizacion}  and Proposition  \ref{prop_proper_eficiencia} the method relies on the assumption that the augmented dual cones are non-empty, which is guaranteed by Proposition~\ref{prop_augmented_dual_cone_nonempty} when the cone generated by the co-radiant subset has a bounded base. Finally, in Theorem \ref{coro_1:escalarizacion22} the sublinear scalarization provides, under the assumption of the SSP, a necessary condition for $\varepsilon$-properly efficiency. 
In our results, no additional assumptions on the co-radiant subset \(\mathcal{C}\) or convexity conditions on the set \(A\) are required.
\end{remark}

Let us introduce another notion to be used in our last remark.  Fix $g: X \rightarrow \mathbb{R}$, $A \subset X$,  and $A \not = \emptyset$. For every  $\varepsilon > 0$,  it is defined the set of $\varepsilon$-approximate solutions of $\mbox{Min} \{g(x) \colon x \in A\}$ as $\text{AMin}(g, A,\varepsilon) := \{x_0 \in A \colon g(x_0) - \varepsilon \leq g(x), \, \forall x \in A\}.$ It is easy to check that $\cap_{\varepsilon>0}\text{AMin}(g, A,\varepsilon)=\mbox{Min} \{g(x) \colon x \in A\}$.   The following formula 
$$\text{AMin}(p_{f,\alpha}, A,\varepsilon)\subsetneq \{x_0\in A\colon 0_X\in \mbox{AMin}(p_{f,\alpha},A-x_0,\varepsilon)\}=\mbox{AE}(A,\cK_{f,\alpha},\varepsilon),$$
shows that the set of $\varepsilon$-approximate solutions considered in \cite{Gutierrez2006a} is a strict subset of the set of $\varepsilon$-efficient points we have handled in this section when $g=p_{f,\alpha}$. 

\begin{remark}\label{remark_escalarizacion_gutierrez_2}
In \cite{Gutierrez2006a}, some necessary  and sufficient conditions for $\varepsilon$-efficiency were obtained via $\varepsilon$-approximate solutions of related scalar optimization problems.  For necessary conditions, the authors had to assume that $\cC$ is star-shaped with kern$(\cC)$ solid  and, for sufficient conditions,  they had to use an extra strictly local monotone function.  In our approach, we find it more convenient to consider suitable functions $g$ that are sublinear; in particular, the role of $g$ here is played by $p_{f,\alpha}$. This choice allows us not only  to avoid the restrictions on $\cC$, but  consider a bigger set of solutions for the scalarizations.
\end{remark}

\section*{Acknowledgements.} The authors thank the anonymous referee for their careful reading and insightful suggestions, which helped improve the quality and presentation of the manuscript.

\section*{Funding}
The authors acknowledge the financial support from Project PID2021-122126NB-C32 funded by MICIU/AEI /10.13039/501100011033/ and FEDER A way of making Europe.

%

\end{document}